\newcommand*{\rom}[1]{\expandafter\@slowromancap\romannumeral {\sharp}1@}
\theoremstyle{definition}
\newtheorem{fact}{fact}
\newtheorem{thm}[fact]{Theorem}
\newtheorem{lemma}[fact]{Lemma}
\newtheorem{prop}[fact]{Proposition}
\newtheorem{corollary}[fact]{Corollary}
\newtheorem{defini}[fact]{Definition}
\newtheorem{remark}[fact]{Remark}
\newtheorem{question}[fact]{Question}
\title{Space and Time Complexity for Infinite Time Turing Machines}
\author{Merlin Carl}
\begin{document}
	
	\begin{abstract}
		We consider notions of space complexity for Infinite Time Turing Machines (ITTMs) that were introduced by B. L\"owe in \cite{Loe} and studied further by Winter in \cite{Wi1} and \cite{Wi2}.
		We answer several open questions about these notions, among them whether low space complexity implies low time complexity (it does not) and whether one of the equalities P=PSPACE, P$_{+}=$PSPACE$_{+}$ and P$_{++}=$PSPACE$_{++}$ holds for ITTMs (all three are false). We also show various separation results between space complexity classes for ITTMs. This considerably expands our earlier observations on the topic in section 7.2.2 of \cite{Ca2}, which appear here as Lemma $6$ up to Corollary $9$.
	\end{abstract}
	
\maketitle

\section{Introduction}

Complexity theory for ITTMs was started by Schindler in \cite{Schi} with the observation that, with natural analogues of the classes P and NP for ITTMs, we have P$\neq$NP for ITTMs.
While time complexity for ITTMs received some further attention, e.g., in \cite{DHS}, a satisfying analysis of space complexity was hindered by the fact that all but the most trivial ITTM-computations use the whole tape length $\omega$ and thus have the same space usage.

In \cite{Loe}, L\"owe suggested an alternative view: Given that a tape of length $\omega$ can, via coding, simulate tapes of much larger order types, the complexity of the tape contents during the computations rather than the number of used cells should be used as a measure of space usage. This idea was further pursued by Winter in \cite{Wi1} and \cite{Wi2}. 

One general conjecture in \cite{Loe} is that sets decidable by computations that consist entirely of `simple' snapshots can also be decided in short time, which would mean that 
time and space complexity for ITTMs are strongly related. 

In this paper, we we will disprove this conjecture by showing that, if $\sigma$ denotes the minimal ordinal such that $L_{\sigma}$ is $\Sigma_{1}$-elementary in $L$, then, for any $\alpha<\sigma$, there are sets of real numbers that are ITTM-decidable with extremely simple snapshots, but not with time bounded by $\alpha$. As we will also see that uniform time bounds  on ITTM-decision times are always $<\sigma$, there seems to be no influence of content complexity on time complexity for ITTMs. As a byproduct, we obtain that none of the equalities P=PSPACE, P$_{+}=$PSPACE$_{+}$ and P$_{++}=$PSPACE$_{++}$ holds for ITTMs (these classes will be defined below); this answers an open question by B. L\"owe (see \cite{BIWOC}, p. 42) and another one by J. Winter (ebd).

Moreover, we will show that many of the space complexity classes defined in \cite{Loe} and \cite{Wi1} are distinct.

\section{Preliminaries}


For Infinite Time Turing Machines (ITTMs) and basic notions like writability (of a real number) or decidability (of a set of real numbers), we refer to \cite{HL}, \cite{W1}, \cite{W2}. We will also freely use notions like writability, clockability, eventual writability, accidental writability, decidability and recognizability, all of which can be found in \cite{HL}, along with the following well-known facts about ITTMs:

\begin{itemize}
	\item (Welch, see \cite{W1}) For each $x\subseteq\omega$, there are ordinals $\lambda^{x}<\zeta^{x}<\Sigma^{x}$ such that a real number $y\subseteq\omega$ is ITTM-writable/eventually writable/accidentally writable in the oracle $x$ if and only if $y\in L_{\lambda^{x}}[x]/L_{\zeta^{x}}[x]/L_{\Sigma^{x}}[x]$; moreover, $(\lambda^{x},\zeta^{x},\Sigma^{x})$ is lexically minimal with the property that $L_{\lambda^{x}}[x]\prec_{\Sigma_{1}}L_{\zeta^{x}}[x]\prec_{\Sigma_{2}}L_{\Sigma{x}}[x]$.
	\item (Welch, \cite{W1}) The supremum of the ITTM-writable ordinals coincides with the supremum of the ITTM-clockable ordinals.
	\item (Hamkins and Lewis, \cite{HL}, Welch, \cite{W1}) For any ITTM-program $P$ and any $x\subseteq\omega$, $P^{x}$ will either halt in $<\lambda^{x}$ many steps or run into a strong loop before time $\Sigma^{x}$. Here, a strong loop means that a snapshot $s$ is repeated at limit times and that, between the two occurences of $s$, all occuring snapshots weakly majorized $s$ in all components. In particular, if a cell contains $0$ in $s$, then it contained $0$ for all snapshots occuring between the two occurences of $s$.
	\item There are no ITTM-recognizable real numbers in $L_{\Sigma}\setminus L_{\lambda}$.
	\item The minimal $L$-level that contains all ITTM-recognizable real numbers is $L_{\sigma}$, where $\sigma$ is minimal with the property $L_{\sigma}\prec_{\Sigma_{1}}L$.
	\item (Folklore) If $c\subseteq\omega$ is ITTM-recognizable, then $c\in L_{\lambda^{c}}$.
\end{itemize}

A model of transfinite computability that is less well known than ITTMs are Infinite Time Register Machines (ITRMs), introduced by Koepke \cite{ITRM1}. These will play an important role in several of the proofs below. 
For the definition and main results on Infinite Time Register Machines (ITRMs), we refer to \cite{ITRM1}, \cite{ITRM2}. 

\begin{lemma}{\label{ITTM rec multitape simulation}}
Let $P$ be an ITTM-program using a finite number $n$ of scratch tapes and let $x\subseteq\omega$ be such that $P^{x}$ uses only recursive snapshots. Then $P$ can be simulated by an ITTM-program $Q$ with a single scratch tape that uses only recursive scratch tapes.
\end{lemma}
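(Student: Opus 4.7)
The plan is a routine multi-tape-to-single-tape simulation, where the only new thing to verify is that recursiveness of the scratch tape content is preserved throughout. I would fix a primitive recursive bijection $\pi:\{1,\ldots,n\}\times\omega\to\omega$, reserving, say, a finite initial block of cells for bookkeeping. The machine $Q$ interprets its single scratch tape so that cell $\pi(i,k)$ stores the content of cell $k$ of $P$'s $i$-th scratch tape, while the reserved block encodes $P$'s $n$ head positions (as unary counters), the current state of $P$, and a one-bit flag indicating whether $Q$ is in the middle of simulating a single step of $P$ or in its canonical ``between blocks'' configuration.

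To simulate one step $\alpha\to\alpha+1$ of $P^{x}$, the machine $Q$ sets the flag to $1$, walks to each of the $n$ interleaved cells $\pi(i,k_{i}(\alpha))$ to read the scanned symbols, consults the finite transition table of $P$ internally, updates the interleaved cells and the head-position counters, and then resets the flag to $0$. Each such block takes only finitely many $Q$-steps, so the whole simulation of $P^{x}$ proceeds at a rate of at most $\omega$ of $Q$'s steps per $P$-step, and all the data of a $P$-snapshot are recoverable from $Q$'s tape at the end of each block.

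At a limit ordinal $\lambda$ of $P$'s clock, the liminf of $Q$'s tape at the corresponding $Q$-time is computed cell by cell. For an interleaved cell $\pi(i,k)$ the value during the simulation block for step $\alpha\to\alpha+1$ lies in $\{v^{(i,k)}_{\alpha},\,v^{(i,k)}_{\alpha+1}\}$, where $v^{(i,k)}_{\alpha}$ denotes the content of $P$'s cell $k$ of tape $i$ at time $\alpha$, so the liminf over $Q$-time on that cell matches the liminf over $P$-time of $v^{(i,k)}_{\alpha}$, which by the ITTM limit rule is precisely $v^{(i,k)}_{\lambda}$. The flag and head-position cells liminf back to their canonical values (since they were reset to them at the end of every block), so $Q$ detects that it has reached a limit of $P$'s clock, installs $P$'s limit state, and resumes the simulation.

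Hence at every ordinal time $\beta$ in $Q$'s computation the scratch tape of $Q$ is a recursive rearrangement (via $\pi$) of the current tape tuple of $P^{x}$, up to finitely many bookkeeping cells. Since by hypothesis each snapshot of $P^{x}$ has recursive tape contents, each snapshot of $Q$ has a recursive scratch tape, as required. The main technical point is the limit behavior: one must check that the bookkeeping region returns to its canonical configuration at each limit and that the liminfs on interleaved cells agree with the liminfs of $P$'s cells; both follow from the flag-reset discipline sketched above, so no further obstruction arises.
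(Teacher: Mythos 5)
Your proposal is correct and takes essentially the same route as the paper, whose entire proof is the one-line interleaving ``write the $i$th bit of the $j$th tape to the cell with index $in+j$''; your extra care about the bookkeeping region and the limit-stage liminf analysis just makes explicit what the paper leaves implicit. The only quibble is that unary head-position counters cannot fit in a \emph{finite} reserved block; store them on additional interleaved tracks (or mark the scanned cell on each track), which changes nothing about the recursiveness of the snapshots.
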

\begin{proof}
Split the tape into $n$ portions and write the $i$th bit of the $j$th tape to the cell with index $in+j$. 
\end{proof}

If $\alpha$ is an ordinal, an $\alpha$-ITRM works like an ITRM, but the bound on the register contents is $\alpha$ rather than $\omega$. They were suggested by Koepke in \cite{ordinal computability}, but have received comparably little attention so far (see, however, \cite{alpha itrms} and \cite{Ca2} for some recent progress on determining their computational strength).

\begin{lemma}{\label{alpha ITRM ITTM simulation}}
Let $\alpha<\omega_{1}^{\text{CK}}$, and let $P$ be an $\alpha$-ITRM-program. Then there is an ITTM-program $Q$ such that, for any $x\subseteq\omega$, $Q^{x}$ has the same halting behaviour and output as $P^{x}$ and only produces recursive snapshots.
\end{lemma}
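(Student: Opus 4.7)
The plan is to fix a recursive well-ordering $\prec$ of $\omega$ of order type $\alpha$ (which exists because $\alpha<\omega_{1}^{\text{CK}}$), with order isomorphism $\pi\colon(\omega,\prec)\to(\alpha,<)$, and write $m_{\beta}:=\pi^{-1}(\beta)$ for $\beta<\alpha$. Each $\alpha$-ITRM-register value $\beta$ will be encoded on a dedicated region of $Q$'s tape by the characteristic function of $U_{\beta}:=\{n\in\omega : n\succeq m_{\beta}\}$. Because $\prec$ is recursive and $m_{\beta}\in\omega$, $\chi_{U_{\beta}}$ is a recursive real, so every snapshot whose register regions carry encodings of this shape is itself a recursive real. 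The decisive feature of this encoding is that the ITTM's cell-wise $\limsup$ rule at limit times reproduces the ordinal $\liminf$ rule governing ITRM limit stages: for any limit $\lambda$ and any sequence of register values $(\beta_{t})_{t<\lambda}$,
\[
\limsup_{t<\lambda}\chi_{U_{\beta_{t}}}(n)=1 \;\Longleftrightarrow\; \text{cofinally often } n\succeq m_{\beta_{t}} \;\Longleftrightarrow\; n\succeq m_{\liminf_{t<\lambda}\beta_{t}},
\]
so whenever $Q$ maintains the encoding at the end of every simulated ITRM step, the ITTM will automatically produce the correct encoding of the ITRM liminf value at every ITRM-limit.

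Next I would have $Q$ simulate $P$ in blocks of fixed recursive length (say, $\omega^{2}$ ITTM steps per ITRM step), chosen so that limit stages of the ITRM correspond to genuine ITTM limits. Using Lemma \ref{ITTM rec multitape simulation} to allow multiple scratch tapes without losing recursiveness of snapshots, each ITRM register is assigned its own tape region and separate scratch regions house all intermediate computation. The elementary ITRM instructions translate cleanly: resetting $R$ to $0$ writes $1$ into every cell of its region (yielding $U_{0}=\omega$); the test ``$R=0$?'' reduces to a single-cell look-up at the fixed index $m_{0}$; incrementing $R$ means deleting the single cell $m_{\beta}=\min_{\prec}U_{\beta}$ from $R$'s region. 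The minimum $m_{\beta}$ can be computed in fewer than $\omega^{2}$ ITTM steps by iterating $n=0,1,2,\ldots$ and, for each $n\in U_{\beta}$, testing in $\omega$ further steps whether no $m\prec n$ lies in $U_{\beta}$ via a flag cell initialized to $1$ and cleared on the first witness seen, reading the flag's $\limsup$ at the end. Oracle queries to $x$ and control-flow are handled directly, and overflow at an ITRM-limit (when $\liminf_{t<\lambda}\beta_{t}=\alpha$) is detected on the ITTM side by the corresponding register region becoming empty.

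The main obstacle I expect is the joint management of timing and content complexity. Every subroutine must be confined to the scratch regions and may touch the register regions only via atomic single-cell updates at points synchronized with the block structure, so that no intermediate configuration spuriously contributes to the $\limsup$ on register cells at an ITRM-limit. This will be arranged by the strict scratch/register separation together with the block length $\omega^{2}$, which is closed under multiplication by limit ordinals and therefore sends genuine ITRM-limits to genuine ITTM-limits. With these conventions, all snapshots of $Q^{x}$ are recursive (the register regions hold the recursive sets $U_{\beta}$, the scratch holds bounded bookkeeping recursive in the fixed code of $\prec$, and the oracle $x$ only affects which specific recursive real appears at each moment, not its recursiveness), and $Q^{x}$ halts precisely when $P^{x}$ does and produces the same output, decoded from the final register encodings.
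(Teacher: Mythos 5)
Your proposal follows the same overall strategy as the paper's proof: fix a recursive well-ordering of $\omega$ of order type $\alpha$, represent each register content $\beta$ by a recursive subset of a dedicated tape region, simulate the $\alpha$-ITRM instruction by instruction, and let the ITTM limit rule take care of ITRM limit stages. The one substantive difference is your choice of encoding, and it matters. The paper encodes $\beta$ by the downward set $\{i : f(i)<\beta\}$; under the cell-wise $\limsup$ rule this converges at a limit to the encoding of $\limsup_{t}\beta_{t}$, whereas the ITRM register takes the value $\liminf_{t}\beta_{t}$ there, so for oscillating register sequences the paper's remark that ``the simulation will be automatically correct at limits'' does not hold as literally stated. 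Your complemented encoding $U_{\beta}=\{n : n\succeq m_{\beta}\}$ turns the cell-wise $\limsup$ into exactly the register $\liminf$ (your displayed equivalence is correct), which is the right fix. Your block-timing discipline also addresses a point the paper glosses over, namely that intermediate configurations during an instruction must not pollute the $\limsup$; note that reset and copy are not single-cell updates, but since their intermediate register-region contents are sandwiched between the encodings at the two adjacent step boundaries, both of which occur as full snapshots, they contribute nothing new to the cofinal behaviour, so your scheme survives this relaxation. The one place where you inherit rather than repair the paper's oversight is the phrase ``control-flow is handled directly'': the active program line is likewise governed by a $\liminf$ rule at ITRM limits, so its tape representation must be complemented in the same way (e.g.\ store line $k$ as $1$s on all cells of index $\geq k$); with the paper's ``$1$s on the first $k$ cells'' convention the $\limsup$ again yields the wrong program line. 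With that adjustment your argument is complete, and it is the corrected form of the paper's own proof.
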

\begin{proof}
We prove this in general, although only the somewhat simpler case $\alpha=\omega$ will be needed below.

$Q$ will simply simulate $P$ on the input $x$. To this end, we let $Q$ use one tape for each register used by $P$ and then use Lemma \ref{ITTM rec multitape simulation}. Fix a recursive bijection $f:\omega\rightarrow\alpha$. Now, the $i$th cell of a scratch tape will represent $f(i)$. The register content $\beta<\alpha$ in register $k$ will be represented by writing $1$ to all cells of the $k$th scratch tape that have an index $i$ with $f(i)<\beta$ and $0$ to all other cells. Clearly, this will be a recursive real number.

On a further scratch tape, the active program line is stored by just writing $1$s to the first $k$ cells when $k$ is the active program line and $0$ to all other cells.

We show how to simulate each of the register machine commands:

\begin{itemize}
\item whether a register contains $0$ can be seen by checking the cell corresponding to $0$

\item a conditional jump is simulated by writing the new active program line to the corresponding scratch tape, from left to right.

\item the COPY instruction from register $i$ to register $j$ is simulated by copying the content of scratch tape $i$ to scratch tape $j$ bit by bit, from left to right. Clearly, if the contents of both tapes were recursive to begin with, they will remain so.

\item the incrementation operation is carried out on the $k$th register by searching through $\omega$ for the minimal $\iota<\alpha$ such that the $f^{-1}(\iota)$th cell of the $k$th tape contains $0$ and replacing its content with $1$. Clearly, this can be done with only recursive snapshots, as $f$ is recursive. 

\item whether or not the $k$th tape only contains $1$s can be easily tested with a flag routine. if that is the case, replace them by $0$ one by one, starting from the left. again, this will only use recursive snapshots.

\end{itemize}

Note that the simulation will be automatically correct at limits.

\end{proof}


We recall the Jensen-Karp-theorem from Jensen and Karp, \cite{JK}:

\begin{thm}{\label{jk}}
Let $\phi$ be a $\Sigma_{1}$-formula (possibly using real parameters) and let $\alpha$ be a limit of admissible ordinals such that $V_{\alpha}\models\phi$. Then $L_{\alpha}\models\phi$.
\end{thm}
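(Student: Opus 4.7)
The plan is to reduce $\Sigma_{1}$-truth in $V_{\alpha}$ to a real-coded witness via a countable submodel, transfer the witness into $L$ by Shoenfield absoluteness, and finally use the limit-of-admissibles hypothesis to place the $L$-witness below $\alpha$. Write $\phi$ as $\exists x\,\psi(x,\bar r)$ with $\psi$ a $\Delta_{0}$-formula and $\bar r$ a finite tuple of real parameters, which for $L_{\alpha}\models\phi$ to be meaningful must lie in $L_{\alpha}$. Assume $V_{\alpha}\models\phi$ and fix a witness $a\in V_{\alpha}$; since $\alpha$ is a limit of admissibles, there is an admissible $\beta<\alpha$ with $a\in V_{\beta}$.

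First I would replace $a$, which may have arbitrary rank below $\alpha$, by a countable surrogate. Take a countable $M\prec V_{\beta}$ containing $a$ and $\bar r$ and form its Mostowski collapse $\bar M$; the collapse fixes $\bar r$, and there is $a'\in\bar M$ with $\bar M\models\psi(a',\bar r)$. The assertion ``some real $c$ codes a well-founded extensional structure containing an element satisfying $\psi(\cdot,\bar r)$'' combines an arithmetic part with a $\Pi^{1}_{1}$-wellfoundedness clause and is therefore $\Sigma^{1}_{2}(\bar r)$; it holds in $V$ by the previous paragraph, and by Shoenfield absoluteness it also holds in $L[\bar r]$, yielding a code $c\in L[\bar r]$.

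The remaining step is to show that such a $c$ can be found inside $L_{\alpha}$. Since $\bar r\in L_{\alpha}$ and $\alpha$ is a limit of admissibles, pick an admissible $\gamma<\alpha$ with $\bar r\in L_{\gamma}$. Analyzing the Shoenfield tree shows that the least $L[\bar r]$-stage at which a $\Sigma^{1}_{2}(\bar r)$-witness appears is bounded by the next $\bar r$-admissible above $\gamma$; by the limit-of-admissibles hypothesis this bound still lies below $\alpha$, so $c\in L_{\alpha}$. Since $\psi$ is $\Delta_{0}$ and hence absolute between transitive models, $c$ directly provides an internal witness, and we conclude $L_{\alpha}\models\phi$.

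The main obstacle is exactly this last ordinal bound: Shoenfield absoluteness alone locates a witness in $L[\bar r]$ without any information on its $L$-height, and the content of the Jensen--Karp theorem is precisely that the ``limit of admissibles'' hypothesis suffices to absorb this cost. By contrast, the L\"owenheim--Skolem/Mostowski reduction and the Shoenfield transfer are routine once one commits to replacing the possibly high-rank witness $a$ by a real-coded surrogate.
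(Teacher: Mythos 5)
The paper does not actually prove this statement; it is quoted as a black box from Jensen--Karp \cite{JK}, so there is no in-paper argument to compare yours against and I can only assess your attempt on its own terms.

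The reduction to a real-coded countable surrogate (L\"owenheim--Skolem inside $V_{\beta}$ for an admissible $\beta<\alpha$ containing the witness, Mostowski collapse, absoluteness of the $\Delta_{0}$ matrix) and the Shoenfield transfer of the resulting $\Sigma^{1}_{2}(\bar r)$ assertion into $L[\bar r]$ are fine. The gap is exactly at the point you yourself flag as ``the main obstacle'': the claim that the least $L[\bar r]$-stage at which a $\Sigma^{1}_{2}(\bar r)$-witness appears is bounded by the next $\bar r$-admissible above $\gamma$ is false, and no analysis of the Shoenfield tree yields it. Already the parameter-free true $\Sigma^{1}_{1}$ sentence ``there is a non-hyperarithmetic real'' has its least $L$-witness at stage $\omega_{1}^{\text{CK}}+1$, past the first admissible; for each $n$, the true parameter-free $\Sigma^{1}_{2}$ sentence ``there is a real coding a well-founded model of KP containing $n$ nested well-founded models of KP'' has its least $L$-witness only above the $n$-th admissible; and the proof of Theorem \ref{sigma bound on inhabited classes} in this paper, together with Theorem \ref{recogdistribution}, shows that the least $L$-levels of witnesses to true parameter-free $\Sigma^{1}_{2}$ sentences are cofinal in $\sigma$, hence not bounded by any fixed admissible. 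Shoenfield absoluteness locates a witness somewhere in $L[\bar r]$ with no usable height information, and your argument never uses the fact that the original witness $a$ lies in $V_{\beta}$ with $\beta<\alpha$ to control where the coded surrogate first appears in $L$ --- yet coupling the rank of the witness to the $L$-level of its code is the entire content of the theorem, and any ``refined Shoenfield'' strong enough to supply that coupling is essentially the theorem itself. A correct proof has to work level by level, e.g.\ via the Jensen--Karp machinery of primitive recursive set functions, or via Barwise-compactness/Gandy-basis-type arguments carried out inside the admissible stages $L_{\beta}$ below $\alpha$; routine Shoenfield does not suffice.
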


We will write WO for the set of real numbers that encode well-orderings. As usual, we will denote the next admissible ordinal after an ordinal $\alpha$ by $\alpha^{+}$ and the next limit of admissible ordinals after $\alpha$ by $\alpha^{+\omega}$. 
A basic result about ITRMs is that WO is ITRM-decidable, see Koepke and Miller \cite{ITRM2}.

Moreover, we recall the following statement from \cite{Ca1}:

\begin{thm}{\label{recogdistribution}}
Every ITRM-recognizable real number is contained in $L_{\sigma}$. Moreover, for any $\alpha<\sigma$, there is an ITRM-recognizable real number in $L_{\sigma}\setminus L_{\alpha}$.
\end{thm}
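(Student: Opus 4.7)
The proof naturally splits into the two claims. For the upper bound $L_\sigma$, I would invoke $\Sigma_1$-elementarity of $L_\sigma$ in $L$ together with absoluteness of the ITRM-halting predicate. The statement ``$P^x\downarrow = 1$'' is $\Sigma_1$-expressible via existence of a computation sequence of some ordinal length, so if $c$ is recognized by program $P$, the parameter-free $\Sigma_1$-sentence $\exists x\,(P^x\downarrow = 1)$ is true in $L$ and hence, by $L_\sigma \prec_{\Sigma_1} L$, already true in $L_\sigma$. Absoluteness of halting upgrades this back to $V$, so the resulting $y \in L_\sigma$ is genuinely recognized by $P$, whence $y = c$ by uniqueness of recognized reals and $c \in L_\sigma$.

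For the separation claim, fix $\alpha < \sigma$. By minimality of $\sigma$ there is a parameter-free $\Sigma_1$-sentence $\phi$ that holds in $L$ but fails in $L_\alpha$. Let $\beta$ be least with $L_\beta \models \phi$; then $\alpha < \beta \leq \sigma$, and after possibly strengthening $\phi$ by a reflection argument I may assume $\beta < \sigma$. My candidate is the $<_L$-least real $c$ coding the pair $(L_\beta, \triangleleft)$, where $\triangleleft$ is a well-ordering of $\omega$ of type $\beta$ together with a decoding map $\omega \to L_\beta$. This $c$ lies in $L_\sigma$, and since decoding $c$ recovers $L_\beta$, which is not an element of $L_\alpha$, also $c \notin L_\alpha$.

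To show $c$ is ITRM-recognizable, I would design a program $P$ that on input $x$ performs three tests: (i) verify that $x$ codes a well-ordering of $\omega$, using ITRM-decidability of WO; (ii) decode the $L$-hierarchy along the given well-order from $x$ and check that $\phi$ first becomes true at the top level but fails in every proper initial segment; (iii) check $<_L$-minimality of $x$ among reals passing (i) and (ii) by enumerating $<_L$-earlier candidates and rejecting $x$ if any of them succeeds.

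The main obstacle is step (ii): the ordinal $\beta$ can be arbitrarily close to $\sigma$, far exceeding the halting-time bound of any oracle-free ITRM, so one cannot simply let the machine run through the $L$-construction by itself. This is resolved by the observation that once $x$ is available as an oracle and has been certified as a well-ordering code, the ITRM can implement a transfinite recursion along the ordering coded by $x$ to carry out the $L$-construction level by level, so its runtime scales with $\beta$ rather than being bounded a priori; correctness at limits is the standard lim-inf behaviour of ITRM-registers. Verifying $<_L$-minimality in step (iii) is analogous, since $<_L$ is $\Sigma_1$-definable and the earlier candidates can also be checked by the same oracle-driven simulation.
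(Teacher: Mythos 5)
First, a framing remark: the paper does not prove this theorem at all --- it is imported verbatim from \cite{Ca1} (``we recall the following statement from \cite{Ca1}''), so your attempt can only be measured against the known proof there, not against anything in this text.

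Your first half is essentially that standard argument and is correct: ``$\exists x\,(P^{x}\downarrow=1)$'' is $\Sigma^{1}_{2}$ (halting ITRM-computations have countable length and are codable by reals), Shoenfield pushes it down into $L$, $\Sigma_{1}$-elementarity pushes it into $L_{\sigma}$, upward absoluteness of computations returns the witness to $V$, and uniqueness of the recognized real forces the witness to equal $c$. The only soft spot is the step ``is true in $L$'': at that point you do not yet know $c\in L$, so this is genuinely Shoenfield absoluteness rather than upward absoluteness of the halting predicate; you should say so explicitly. (Compare the identical manoeuvre in the paper's proof of Theorem \ref{sigma bound on inhabited classes}.)

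The second half has the right skeleton --- ordinals of the form ``least $\beta$ with $L_{\beta}\models\phi$'' for parameter-free $\Sigma_{1}$-sentences $\phi$ are cofinal in $\sigma$, and the candidate is the $<_{L}$-least code of such an $L_{\beta}$ --- but the step carrying all the weight, namely that this code is ITRM-\emph{recognizable}, is not actually argued, and the mechanism you describe is not available. An ITRM has finitely many registers, each holding a single natural number; it cannot ``carry out the $L$-construction level by level'' or ``decode the $L$-hierarchy'' in any literal sense, because it has nowhere to store even one level, and ``transfinite recursion along the ordering coded by $x$'' is not an operation the model supports. What an ITRM \emph{can} do is verify properties of the structure coded by its oracle: decide well-foundedness of the coded relation (via WO-decidability, \cite{ITRM2}) and evaluate first-order formulas in the coded structure via a truth-predicate routine. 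Recognition therefore works by checking that the oracle codes a well-founded extensional structure satisfying an internal first-order axiomatization of being an $L$-level whose top satisfies $\phi$ while no proper initial segment does, and then invoking condensation \emph{externally} to conclude that the structure really is $L_{\beta}$; likewise the $<_{L}$-minimality test must be internalized (all $<_{L}$-smaller candidate codes already appear inside a slightly larger coded level), since a machine cannot ``enumerate $<_{L}$-earlier candidates'' over the continuum. These verification lemmas are precisely the technical content of \cite{Ca1}; without them your step (ii) is a statement of intent rather than a proof. A smaller, fixable point: $c\notin L_{\alpha}$ does not follow merely from $c$ coding an ordinal $\beta>\alpha$ when $\alpha$ is not admissible; take $\beta$ admissible (possible, as such $\beta$ are cofinal in $\sigma$) so that no real in $L_{\beta}\supseteq L_{\alpha}$ codes a well-ordering of type $\geq\beta$.
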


The basic notions of space complexity theory for ITTMs were defined by B. L\"owe in \cite{Loe}, while those of time complexity for ITTMs were given by Schindler in \cite{Schi}. We give a brief summary.

\begin{defini}
	Let $X\subseteq\mathfrak{P}(\omega)$.
	\begin{itemize}
	\item For $f:\mathfrak{P}(\omega)\rightarrow\text{On}$, we say that $X$ belongs to TIME$_{f}^{\text{ITTM}}$ if and only if there is an ITTM-program $P$ that decides $X$ and halts in $<f(x)$ many steps on input $x$. If $f$ is constant with value $\alpha$, we will write TIME$_{\alpha}^{\text{ITTM}}$. P$^{\text{ITTM}}$ then denotes TIME$_{\omega^{\omega}}^{\text{ITTM}}$. If $f$ is the function $x\mapsto\omega_{1}^{\text{CK},x}$, we write P$^{\text{ITTM}}_{+}$ for TIME$_{\alpha}^{\text{ITTM}}$. If $f$ is the function $x\mapsto\omega_{1}^{\text{CK},x}+\omega+1$, we write $P^{\text{ITTM}}_{++}$ for TIME$_{\alpha}^{\text{ITTM}}$.
	\item For $f:\mathfrak{P}(\omega)\rightarrow\text{On}$, we say that $X$ belongs to SPACE$_{f}^{\text{ITTM}}$ if and only if there is an ITTM-program $P$ that decides $X$ and such that all snapshots occuring during the computation of $P^{x}$ for any $x\subseteq\omega$ are contained in $L_{f(x)}[x]$. If $f$ is constant with value $\alpha$, we will write SPACE$_{\alpha}^{\text{ITTM}}$. If $P$ is a program witnessing this, we also say, by a slight abuse of notation, that $P$ decides $X$ `with snapshots in $L_{\alpha}$'.\footnote{Note that this means that deciding $X$ `with snapshots in $L_{\alpha}$ allows the snapshots to come from $L_{\alpha}[x]$ for all inputs $x\subseteq\omega$.} PSPACE$^{\text{ITTM}}$ then denotes SPACE$_{\omega^{\omega}}^{\text{ITTM}}$. If $f$ is the function $x\mapsto\omega_{1}^{\text{CK},x}$, we write PSPACE$^{\text{ITTM}}_{+}$ for SPACE$_{\alpha}^{\text{ITTM}}$. If $f$ is the function $x\mapsto\omega_{1}^{\text{CK},x}+\omega+1$, we write PSPACE$^{\text{ITTM}}_{++}$ for SPACE$_{\alpha}^{\text{ITTM}}$. If $X$ can be decided by an ITTM-program $P$ that only uses recursive snapshots on all inputs, we say that $X$ is SPACE$^{\text{ITTM}}_{\text{REC}}$ or that $X$ is ITTM-decidable with recursive snapshots.
	\end{itemize}
\end{defini}

Throughout the paper, we fix a natural enumeration $(P_{i}:i\in\omega)$ of the ITTM-programs.

\section{The connection between space and time complexity for ITTMs}

We now consider the question whether the fact that a set $X\subseteq\mathfrak{P}(\omega)$ can be decided by an ITTM using only `simple' snapshots implies that $X$ is also quickly decidable. Up to Corollary \ref{ITTM time space complexity}, the following is contained in chapter $7$ of the forthcoming monograph \cite{Ca2}. We include proofs for the sake of being self-contained.







We will now show that, for any $\alpha<\sigma$, there are sets $X\subseteq\mathfrak{P}(\omega)$, such that $X$ is ITTM-decidable with recursive snapshots, but does not belong to TIME$^{\text{ITTM}}_{\alpha}$. For $\alpha=\omega^{\omega}$, this answers a question in \cite{Loe} in the negative, namely whether SPACE$^{\text{ITTM}}_{\text{REC}}\subseteq$TIME$^{\text{ITTM}}_{\omega^{\omega}}$.

\begin{lemma}{\label{ITRM recog ITTM simple}} [Cf. \cite{Ca2}, Lemma 7.2.19]
Let $\alpha<\omega_{1}^{\text{Ck}}$.
If $X\subseteq\mathfrak{P}(\omega)$ is $\alpha$-ITRM-decidable, then $X$ is ITTM-decidable with recursive snapshots. 
In particular, if a real number $x$ is ITRM-recognizable, then $\{x\}$ is ITTM-decidable with recursive snapshots. 
\end{lemma}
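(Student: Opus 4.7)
The plan is to derive this directly from Lemma \ref{alpha ITRM ITTM simulation}, which essentially does all the work. Given an $\alpha$-ITRM-program $P$ deciding $X$, I would apply that lemma to obtain an ITTM-program $Q$ such that, for every oracle $x \subseteq \omega$, the computation $Q^{x}$ halts with the same output as $P^{x}$ and only produces recursive snapshots. Since $P$ decides $X$, so does $Q$, and by construction $Q$ witnesses that $X \in \text{SPACE}^{\text{ITTM}}_{\text{REC}}$. The hypothesis $\alpha < \omega_{1}^{\text{CK}}$ is exactly what Lemma \ref{alpha ITRM ITTM simulation} requires, via a recursive bijection $f:\omega\to\alpha$.

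For the ``in particular'' clause, I would first observe that ITRM-recognizability of a real $x$ means that there is an ITRM-program which, on input $y$, halts and outputs $1$ if $y = x$ and $0$ otherwise; equivalently, the singleton $\{x\}$ is ITRM-decidable, i.e.\ $\omega$-ITRM-decidable. Applying the first part of the lemma with $\alpha = \omega$ then yields that $\{x\}$ is ITTM-decidable with recursive snapshots.

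There is essentially no substantial obstacle, since both ingredients (the simulation lemma and the equivalence between recognizability and decidability of the corresponding singleton) are in place. The only point that one might want to check carefully is that Lemma \ref{alpha ITRM ITTM simulation} indeed produces a genuine decision procedure (not merely a halting simulation): the output convention inherited from $P$ ensures the output bit matches, so membership in $X$ is decided. Hence the proof amounts to a one-line invocation of the previous lemma together with the routine observation about recognizability.
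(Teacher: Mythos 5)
Your proposal is correct and follows exactly the paper's route: the paper likewise derives the first claim as an immediate consequence of Lemma \ref{alpha ITRM ITTM simulation} and the second from the first via the observation that recognizability of $x$ is decidability of $\{x\}$ on an ($\omega$-)ITRM. No gaps.
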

\begin{proof}
The first claim is an easy consequence of Lemma \ref{alpha ITRM ITTM simulation}, and the second claim is an easy consequence of the first.



\end{proof}

\noindent
\textbf{Remark}: Note that recursiveness is really an understatement when measuring the complexity of the snapshots occuring during the procedure just described. One can hardly imagine anything simpler that goes beyond only allowing finitely many $1$s on the tape.\footnote{Only allowing finitely many $1$s on the tape leads to a weak version of ITTMs studied in \cite{Matzner}; there, it is shown that the subsets of $\omega$ computable by such a machine is $L_{\omega_{1}^{\text{CK}}}\cap\mathfrak{P}(\omega)$.}

\begin{lemma}{\label{index and halting time}} [Cf. \cite{Ca2}, Lemma 7.2.20]

Let $x$ be  a real number such that $x\notin L_{\alpha^{+\omega}}$. Then $x\notin$TIME$^{\text{ITTM}}_{\alpha}$.
\end{lemma}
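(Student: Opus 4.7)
The conclusion "$x\notin$ TIME$^{\text{ITTM}}_{\alpha}$" I read, in keeping with the notational convention of Lemma \ref{ITRM recog ITTM simple}, as $\{x\}\notin$ TIME$^{\text{ITTM}}_{\alpha}$. I would prove this by contraposition: assume that some ITTM-program $P$ halts on every oracle in strictly fewer than $\alpha$ many steps and outputs $1$ iff the oracle equals $x$. The target is to exhibit $x$ as an element of $L_{\alpha^{+\omega}}$, contradicting the hypothesis.

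The main tool is the Jensen--Karp theorem (Theorem \ref{jk}), applied at the level $\alpha^{+\omega}$, which by definition is a limit of admissible ordinals. I would consider the formula
\[
\phi(P) \;\equiv\; \exists y\,\exists f\;[\,f \text{ is a halting computation of } P^{y} \text{ with output } 1\,].
\]
This is $\Sigma_{1}$ with parameter $P$ (a natural number), since its matrix---correct successor transitions, the liminf rule at limits, and a halting terminal state---is $\Sigma_{0}$. In $V$, $\phi(P)$ is witnessed by $y=x$ together with the trajectory of $P^{x}$. Because that trajectory has length $<\alpha$ it is a set of rank $<\alpha^{+\omega}$, and $x$ itself lies in $V_{\omega+1}\subseteq V_{\alpha^{+\omega}}$; hence $V_{\alpha^{+\omega}}\models\phi(P)$. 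Jensen--Karp then yields $L_{\alpha^{+\omega}}\models\phi(P)$. Picking a witness $y'\in L_{\alpha^{+\omega}}$ and using that $P$ decides $\{x\}$ globally, so $P^{y'}$ outputs $1$ only when $y'=x$, I conclude $y'=x\in L_{\alpha^{+\omega}}$, the desired contradiction.

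The delicate point I anticipate is arranging $\phi$ to be genuinely $\Sigma_{1}$ with only a real (equivalently, natural number) parameter, so as not to smuggle in $\alpha$ as an ordinal parameter that Theorem \ref{jk} is not explicitly stated to admit. The trick is to drop the explicit clock bound $<\alpha$ from $\phi$: the bound $\alpha$ is used only externally, to verify $V_{\alpha^{+\omega}}\models\phi(P)$, while uniqueness of the accepting oracle for $P$ automatically forces any reflected witness to coincide with $x$.
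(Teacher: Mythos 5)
Your proposal is correct and follows essentially the same route as the paper: contraposition, expressing ``there is an oracle on which $P$ halts with output $1$'' as a $\Sigma_{1}$ statement true in $V_{\alpha^{+\omega}}$, applying Jensen--Karp to reflect it into $L_{\alpha^{+\omega}}$, and using that $P$ accepts only $x$ to identify the reflected witness with $x$. Your explicit handling of the two points the paper leaves implicit---dropping the clock bound from the formula so that no ordinal parameter is needed, and invoking uniqueness of the accepting oracle to conclude $y'=x$---is exactly the right way to fill in the details.
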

\begin{proof}
By contraposition. Suppose that $\{x\}\in$TIME$^{\text{ITTM}}_{\alpha}$. Let $P$ be an ITTM-program that recognizes $x$ with uniform time bound $\alpha$. In particular, the computation of $P$ in the oracle $x$ halts in $<\alpha$ many steps. The statement that there is a halting computation of $P$ in some oracle with output $1$ is $\Sigma_{1}$ and holds in $V_{\alpha+\omega}$ and hence in $V_{\alpha^{+\omega}}$. By the Jensen-Karp theorem \ref{jk}, it follows that $x\in L_{\alpha^{+\omega}}$, a contradiction.


\end{proof}

\begin{thm}{\label{space and time for ITTMs}} [Cf. \cite{Ca2}, Theorem 7.2.21]
Let $\alpha<\sigma$. Then TIME$^{\text{ITTM}}_{\alpha}\nsupseteq$SPACE$^{\text{ITTM}}_{\text{REC}}$.
\end{thm}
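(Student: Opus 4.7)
The proof is essentially a concatenation of the three tools just set up. Given $\alpha<\sigma$, my plan is to exhibit a single real number $x$ such that $\{x\}$ witnesses membership in SPACE$^{\text{ITTM}}_{\text{REC}}\setminus$TIME$^{\text{ITTM}}_{\alpha}$. The candidates for $x$ come from Theorem \ref{recogdistribution}: some ITRM-recognizable real lying high enough in the $L$-hierarchy.

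First I would verify that $\alpha^{+\omega}<\sigma$, so that Theorem \ref{recogdistribution} actually supplies a recognizable real past $L_{\alpha^{+\omega}}$. This is where $L_{\sigma}\prec_{\Sigma_{1}}L$ enters: the existence of a limit of admissibles above $\alpha$ is a $\Sigma_{1}$ property in the parameter $\alpha$ that is true in $L$, so by $\Sigma_{1}$-elementarity it is already witnessed inside $L_{\sigma}$, giving $\alpha^{+\omega}<\sigma$. Applying Theorem \ref{recogdistribution} with $\alpha^{+\omega}$ in place of $\alpha$, I pick an ITRM-recognizable real $x\in L_{\sigma}\setminus L_{\alpha^{+\omega}}$.

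Next, Lemma \ref{ITRM recog ITTM simple} immediately yields $\{x\}\in$ SPACE$^{\text{ITTM}}_{\text{REC}}$. On the other hand, Lemma \ref{index and halting time} applied to this $x$ gives $\{x\}\notin$ TIME$^{\text{ITTM}}_{\alpha}$, since $x\notin L_{\alpha^{+\omega}}$. Thus SPACE$^{\text{ITTM}}_{\text{REC}}\nsubseteq$ TIME$^{\text{ITTM}}_{\alpha}$, which is the desired conclusion.

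The only step that is not pure bookkeeping is the closure property $\alpha^{+\omega}<\sigma$, and even that is routine once one remembers the definition of $\sigma$ in terms of $\Sigma_{1}$-elementarity in $L$. Essentially no obstacle remains — the real work has been done in Theorem \ref{recogdistribution}, Lemma \ref{ITRM recog ITTM simple}, and Lemma \ref{index and halting time}, and the theorem is their combination.
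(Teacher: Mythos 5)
Your proof is correct and follows exactly the paper's own argument: note $\alpha^{+\omega}<\sigma$, pick an ITRM-recognizable $x\in L_{\sigma}\setminus L_{\alpha^{+\omega}}$ via Theorem \ref{recogdistribution}, and combine Lemma \ref{ITRM recog ITTM simple} with Lemma \ref{index and halting time}. The only difference is that you spell out the $\Sigma_{1}$-elementarity justification for $\alpha^{+\omega}<\sigma$, which the paper asserts without comment.
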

\begin{proof}
Since $\alpha<\sigma$, we have $\alpha^{+\omega}<\sigma$. Thus, Theorem \ref{recogdistribution} implies that there is an ITRM-recognizable real number $x\in L_{\sigma}\setminus L_{\alpha^{+\omega}}$. Then $\{x\}\in$SPACE$^{\text{ITTM}}_{\text{REC}}\setminus$TIME$^{\text{ITTM}}_{\alpha}$ by Lemma \ref{index and halting time} and Lemma \ref{ITRM recog ITTM simple}.
\end{proof}

Since $L_{\omega+1}$ contains all recursive real numbers, we have:

\begin{corollary}{\label{ITTM time space complexity}} [Cf. \cite{Ca2}, Corollary 7.2.22] 
Let $\alpha<\sigma$. Then TIME$^{\text{ITTM}}_{\alpha}\nsupseteq$SPACE$^{\text{ITTM}}_{\omega+1}$.
\end{corollary}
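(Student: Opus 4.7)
The plan is to reduce the corollary immediately to Theorem \ref{space and time for ITTMs} by observing the obvious inclusion between the two space complexity classes involved. The key fact, flagged by the author in the sentence preceding the corollary, is that every recursive subset of $\omega$ lies in $L_{\omega+1}$: recursive reals are $\Delta_{1}$-definable over $L_{\omega}$ and hence appear at level $\omega+1$ of the constructible hierarchy. Consequently, any ITTM-computation whose snapshots are all recursive uses only snapshots lying in $L_{\omega+1}$ (the tape contents, head position and internal state together being codable as a recursive real whenever the tapes are), and this yields the inclusion SPACE$^{\text{ITTM}}_{\text{REC}}\subseteq$SPACE$^{\text{ITTM}}_{\omega+1}$.

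With this inclusion in hand, the corollary follows by contraposition. If we had TIME$^{\text{ITTM}}_{\alpha}\supseteq$SPACE$^{\text{ITTM}}_{\omega+1}$ for some $\alpha<\sigma$, then chaining with the above inclusion would give TIME$^{\text{ITTM}}_{\alpha}\supseteq$SPACE$^{\text{ITTM}}_{\text{REC}}$, directly contradicting Theorem \ref{space and time for ITTMs}. Thus, for every $\alpha<\sigma$, we obtain a set in SPACE$^{\text{ITTM}}_{\omega+1}$ that is not in TIME$^{\text{ITTM}}_{\alpha}$, namely the singleton $\{x\}$ provided by the proof of Theorem \ref{space and time for ITTMs} (an ITRM-recognizable real in $L_{\sigma}\setminus L_{\alpha^{+\omega}}$, which witnesses the failure of the inclusion).

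There is essentially no hard part here; the only minor subtlety is to make sure that the snapshot of an ITTM running with recursive tape contents really does lie in $L_{\omega+1}$, which is immediate once we recall that a snapshot is a finite tuple consisting of an internal state, a head position and (finitely many) tape contents, all of which are coded as a single recursive real. So the writeup is essentially a one-line observation followed by a reference to Theorem \ref{space and time for ITTMs}.
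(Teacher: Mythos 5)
Your proposal is correct and matches the paper's argument exactly: the paper derives the corollary from Theorem \ref{space and time for ITTMs} via the single observation that $L_{\omega+1}$ contains all recursive real numbers, hence SPACE$^{\text{ITTM}}_{\text{REC}}\subseteq$SPACE$^{\text{ITTM}}_{\omega+1}$. Nothing to add.
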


\subsection{Upper Bounds for Time and Space Complexity}


The above result leaves open the possibility that some version of "low space complexity implies low time complexity" holds for sets that can be decided by ITTMs with a uniform time bound $\geq\sigma$. Clearly, any such nontrivial bound will be $<\omega_{1}$. If we knew that no such bound exists between $\sigma$ and $\omega_{1}$, the question could be regarded as completely settled in the negative. We will now show that the result above is optimal in the sense that all meaningful instances of TIME$^{\text{ITTM}}_{\alpha}$ and SPACE$^{\text{ITTM}}_{\alpha}$ have $\alpha<\sigma$. Thus, any time bound that can occur at all can occur as the minimal decision time bound for a set that is ITTM-decidable with recursive (and much simpler) snapshots. Consequently, there seems to be no connection of the desired kind between time and space complexity for ITTMs.



\begin{defini}
	For an ordinal $\alpha$, let us say that TIME$^{\text{ITTM}}_{\alpha}$ or  SPACE$^{\text{ITTM}}_{\alpha}$ is "inhabited" if and only if
	TIME$^{\text{ITTM}}_{\alpha}\setminus\bigcup_{\iota<\alpha}$TIME$^{\text{ITTM}}_{\iota}\neq\emptyset$
	or
	 SPACE$^{\text{ITTM}}_{\alpha}\setminus\bigcup_{\iota<\alpha}$SPACE$^{\text{ITTM}}_{\iota}\neq\emptyset$, respectively.
\end{defini}


\begin{thm}{\label{sigma bound on inhabited classes}}
	\begin{enumerate}
		\item If TIME$^{\text{ITTM}}_{\alpha}$ is inhabited, then $\alpha<\sigma$.
		\item If SPACE$^{\text{ITTM}}_{\alpha}$ is inhabited, then $\alpha<\sigma$.
	\end{enumerate}
\end{thm}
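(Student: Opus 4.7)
My plan is to prove both statements by contradiction, in each case assuming $\alpha\geq\sigma$ and deriving that the purported witness $X$ actually lies in TIME$^{\text{ITTM}}_{\iota}$ (resp. SPACE$^{\text{ITTM}}_{\iota}$) for some $\iota<\alpha$, contradicting inhabitation. The main tools are the $\Sigma_1$-elementarity $L_\sigma\prec_{\Sigma_1}L$ and admissibility of $\sigma$. For (1), I fix $X$ witnessing inhabitation at $\alpha$ and a program $P$ deciding $X$ with $t_P(x)<\alpha$ for all $x$. Since $P$ halts on every input, Welch's strong-loop characterization makes the statement ``$\exists x\,\exists t\,(P^x\text{ enters a strong loop at }t)$'' a $\Sigma_1$-formula in $P$ that is false in $L$, hence by elementarity false in $L_\sigma$; so $P$ halts on every $x\in L_\sigma$. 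The $\Sigma_1$-absoluteness of ``$P^x$ halts at $t$'' then yields $t_P(x)<\sigma$ for each such $x$, and admissibility of $\sigma$ applied via $\Sigma_1$-replacement to the definable map $x\mapsto t_P(x)$ on the $L_\sigma$-set $\mathfrak{P}(\omega)\cap L_\sigma$ produces an ordinal $\tau:=\sup_{x\in L_\sigma}t_P(x)<\sigma$.

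The next step is a gap argument. For any $t\in(\tau,\sigma)$, the formula ``$\exists x\,(t_P(x)=t)$'' is $\Sigma_1(t,P)$ and false in $L_\sigma$ by the maximality of $\tau$, hence false in $L$; so every real $x$ satisfies $t_P(x)\leq\tau$ or $t_P(x)\geq\sigma$. The main obstacle I expect is ruling out the second alternative. My plan here is to combine Welch's bound $t_P(x)<\lambda^x$ with a further $\Sigma_1$-reflection: an input $x$ with $t_P(x)\geq\sigma$ forces $\lambda^x>\sigma$, and phrasing the conjunction of ``$\lambda^x>\beta$'' with ``$P^x$ halts at some $t\geq\beta$'' as a $\Sigma_1$-statement in $\beta$ and $P$ should, upon reflection to $L_\sigma$ for $\beta$ just below $\sigma$, produce a witness $x'\in L_\sigma$ whose halting time lies in $(\tau,\sigma)$ and thereby contradict the gap. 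A fallback, in case the direct reflection proves subtle, is to construct a replacement program $P'$ that short-circuits $P$'s simulation after $\tau$ many steps, exploiting the $\Sigma_1$-definability of $\tau$ from $P$ inside $L_\sigma$. Either route yields $X\in$ TIME$^{\text{ITTM}}_{\tau+1}$ with $\tau+1<\sigma\leq\alpha$, giving the desired contradiction.

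For (2) the same strategy applies, with the reflecting $\Sigma_1$-formula replaced by ``$\exists x\,\exists t\,\exists s\,(s\text{ is the snapshot of }P^x\text{ at time }t\text{ and }s\notin L_\beta[x])$'' for $\beta<\sigma$. Admissibility together with $\Sigma_1$-reflection bound the $L$-rank of snapshots produced on $L_\sigma$-inputs by some $\tau<\sigma$, and the same gap argument then transfers the bound to all inputs, contradicting the inhabitation of $\alpha\geq\sigma$.
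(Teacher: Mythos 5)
There is a genuine gap at the first substantive step of your argument, and it is the step that carries the whole weight of the theorem. You set $\tau:=\sup_{x\in L_{\sigma}}t_{P}(x)$ and claim $\tau<\sigma$ by ``admissibility of $\sigma$ applied via $\Sigma_{1}$-replacement to the definable map $x\mapsto t_{P}(x)$ on the $L_{\sigma}$-set $\mathfrak{P}(\omega)\cap L_{\sigma}$''. But $\mathfrak{P}(\omega)\cap L_{\sigma}$ is not an element of $L_{\sigma}$: new reals appear cofinally often below $\sigma$ (for instance, by Theorem \ref{recogdistribution} there are ITRM-recognizable reals in $L_{\sigma}\setminus L_{\beta}$ for every $\beta<\sigma$), so this collection is a proper class from the point of view of $L_{\sigma}$ and $\Sigma_{1}$-replacement simply does not apply to it. Worse, the conclusion is false for a general total program: let $P$ decide WO. Then $P$ halts on every input, yet for each $\beta<\sigma$ there is some real $x$ with $t_{P}(x)\geq\beta$ (otherwise WO would lie in TIME$^{\text{ITTM}}_{\beta}$), and reflecting this $\Sigma^{1}_{2}$-statement into $L_{\sigma}$ shows $\sup_{x\in L_{\sigma}}t_{P}(x)=\sigma$. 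What distinguishes your $P$ from this counterexample is exactly the hypothesis that $\alpha$ is a \emph{countable} uniform bound on all halting times --- a hypothesis your derivation of $\tau<\sigma$ never uses (note the theorem must indeed be read for countable $\alpha$; for $\alpha=\omega_{1}$ the class is inhabited by WO). The paper obtains the bound below $\sigma$ in the only way available: it expresses ``there is a countable ordinal bounding the halting time of $P^{x}$ for every real $x$'' as a $\Sigma^{1}_{2}$-statement, transfers it to $L$ by Shoenfield absoluteness, rewrites it as a $\Sigma_{1}$-statement over the $L$-hierarchy, and only then applies $L_{\sigma}\prec_{\Sigma_{1}}L$ to get a uniform bound $\alpha^{\prime}<\sigma$; a second reflection shows $\alpha^{\prime}$ works for all inputs, and inhabitation forces $\alpha\leq\alpha^{\prime}$.

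A secondary issue of the same flavour affects your ``gap argument'': statements of the form ``$\exists x\,(t_{P}(x)=t)$'' quantify over arbitrary reals, so their truth does not pass between $V$ and $L$ by $\Sigma_{1}$-elementarity alone; you need Shoenfield absoluteness together with the $\Sigma^{1}_{2}$-to-$\Sigma_{1}$ translation for the $V$-to-$L$ leg, which you never invoke. Also, ``for $\beta$ just below $\sigma$'' is not meaningful, since $\sigma$ is a limit ordinal. These latter points are patchable; the first gap is not a patch, because deriving $\tau<\sigma$ from the countability of the bound is essentially the content of the theorem.
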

\begin{proof}
The argument for (1) is due to Philipp Schlicht (personal communication), the argument for (2) is completely analogous. We only give the argument for (2) and leave the adaptation to (1) to the reader.

Suppose that $X\in$SPACE$^{\text{ITTM}}_{\alpha}$ and let $P$ be an ITTM-program that decides $X$ and produces only snapshots in $L_{\alpha}[x]$ for each input $x$. Thus, the statement $\phi$ that expresses ``There is a countable ordinal $\alpha$ such that, for every $x$, $P^{x}$ halts and produces only snapshots in $L_{\alpha}[x]$'' is true in $V$. Since computations are unique, ``$P^{x}$ halts and only uses snapshots in $L_{\alpha}[x]$'' is $\Delta_{1}$. Since countable ordinals and halting ITTM-computations (which have countable length) can be encoded by real numbers, $\phi$ can be expressed as a $\Sigma^{1}_{2}$-statement.

By Shoenfield absoluteness, $\phi$ holds in $L$. By standard descriptive set theory (see e.g. \cite{MW}), the $\Sigma^{1}_{2}$-statement $\phi$ can be expressed as a $\Sigma_{1}$-statement. By $\Sigma_{1}$-elementarity, $\phi$ thus holds in $L_{\sigma}$. Consequently, there is $\alpha^{\prime}\in L_{\sigma}$ such that $L_{\sigma}$ believes that $P^{x}$ halts  and produces only snapshots in $L_{\alpha^{\prime}}[x]$ for all $x$. By absoluteness of computations, this implies that, for all $x\in L_{\sigma}$, $P^{x}$ halts and produces only snapshots in $L_{\alpha^{\prime}}[x]$. Clearly, as $\alpha^{\prime}\in L_{\sigma}$, we have $\alpha^{\prime}<\sigma$.

It thus suffices to show that $\alpha^{\prime}\geq\alpha$. If not, there is some real number $y$ such that $P^{y}$ halts and produces snapshots outside of $L_{\alpha^{\prime}}[y]$. Therefore, the statement that there is such a real number, which is $\Sigma_{1}$ in the countable parameter $\alpha^{\prime}$, holds in $V$, and thus in $L$, and thus in $L_{\sigma}$. Hence, there is a real number $y\in L_{\sigma}$ such that $P^{y}$ halts and produces a snapshot outside of $L_{\alpha^{\prime}}[y]$, a contradiction.
	
	
	
\end{proof}

\begin{remark}
	In Winter \cite{Wi1}, the notation P$^{\text{HK}}_{\alpha}$ used for 
	TIME$^{\text{ITTM}}_{\alpha}$ and the notation PSPACE$^{\text{HK}}_{\alpha}$ for SPACE$^{\text{ITTM}}_{\alpha}$;\footnote{The ``HK'' stands for Hamkins-Kidder, reminding that it was J.Hamkins and J. Kidder who originally invented ITTMs, which are therefore also called Hamkins-Kidder-machines.} with this notation, it was asked in \cite{BIWOC} (p.  42, question $1$) for which ordinals we have P$^{\text{HK}}_{\alpha}\subsetneq$PSPACE$^{\text{HK}}_{\alpha}$. In \cite{Wi1}, it is shown that P$^{\text{HK}}_{\alpha}\subseteq$PSPACE$^{\text{HK}}_{\alpha}$ for all $\alpha>\omega$ (Proposition 5.7).  The question is thus when we have inequality.  In \cite{Wi1}, this is shown for $\alpha\in(\omega,\omega_{1}^{\text{CK}}]$ (Theorem 5.14) and certain successor ordinals (Theorem 5.16). Clearly, we have P$^{\text{HK}}_{\omega_{1}}=$PSPACE$^{\text{HK}}_{\omega_{1}}$, as both coincide with the set of ITTM-decidable sets. For countable $\alpha$, Theorem \ref{sigma bound on inhabited classes} implies that both classes only make sense when $\alpha<\sigma$.  For these values of $\alpha>\omega$, Corollary \ref{ITTM time space complexity} tells us that P$^{\text{HK}}_{\alpha}\nsubseteq$PSPACE$^{\text{HK}}_{\omega+1}\subseteq$PSPACE$^{\text{HK}}_{\alpha}$ and thus in particular that P$^{\text{HK}}_{\alpha}\neq$PSPACE$^{\text{HK}}_{\alpha}$. Thus, the proper inclusion relation in question holds for all relevant values of $\alpha$.
\end{remark}

\subsection{Space and Time Complexity with Dependency on the Input}

In \cite{Wi1}, p. 78, it was asked whether P$\subsetneq$PSPACE, P$_{+}\subsetneq$PSPACE$_{+}$ and P$_{++}\subsetneq$PSPACE$_{++}$ hold for ITTMs. (The weak inclusions are both shown in \cite{Wi1}.)
We will now answer the first two questions in the positive by showing that WO belongs to SPACE$^{\text{ITTM}}_{\omega+1}$ (and thus both to PSPACE and PSPACE$_{+}$), but not to $P_{+}$.

\begin{lemma}{\label{WO counterexample}}
WO is decidable with recursive snapshots and thus belongs in particular to SPACE$^{\text{ITTM}}_{\omega+1}$, but WO does not belong to P$_{+}$. In particular, we have $P_{+}\nsupseteq$PSPACE.
\end{lemma}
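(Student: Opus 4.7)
The plan splits into the two independent claims. For the space claim, I would use that WO is ITRM-decidable (Koepke--Miller, as cited in the preliminaries). Applying Lemma~\ref{alpha ITRM ITTM simulation} with $\alpha=\omega$ gives an ITTM-program that decides WO while producing only recursive snapshots. This places WO in SPACE$^{\text{ITTM}}_{\text{REC}}$, and since every recursive real lies in $L_{\omega+1}$, it immediately upgrades to WO $\in$ SPACE$^{\text{ITTM}}_{\omega+1}$, and a fortiori to WO $\in$ PSPACE.

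For the time claim I would argue by contradiction. Suppose $P$ is an ITTM-program deciding WO with $P^{x}$ halting in fewer than $\omega_{1}^{\text{CK},x}$ steps for every real $x$. Then the unique halting computation of $P^{x}$ has length strictly below $\omega_{1}^{\text{CK},x}$ and is coded by a real inside the least $x$-admissible level $L_{\omega_{1}^{\text{CK},x}}[x]$. Consequently, for each $i\in\{0,1\}$, the predicate ``$P^{x}$ halts with output $i$'' is uniformly $\Sigma_{1}$-definable over $L_{\omega_{1}^{\text{CK},x}}[x]$ with parameter $x$: it asserts the existence of an ordinal $\alpha$ and a tape-history function of length $\alpha$ realizing a legal halting run of $P^{x}$ with output $i$, both objects being available inside the level by admissibility.

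I would then invoke the relativized, uniform Gandy--Spector/Kleene theorem, according to which any set of reals with such a definition is $\Pi^{1}_{1}$. Applying this with $i=1$ and $i=0$ shows that both WO and its complement are $\Pi^{1}_{1}$, so WO is $\Delta^{1}_{1}$, that is, Borel. This contradicts the classical fact that WO is $\Pi^{1}_{1}$-complete and hence non-Borel. Combined with the first part, this yields WO $\in$ SPACE$^{\text{ITTM}}_{\omega+1}\setminus$P$_{+}$, and in particular P$_{+}\nsupseteq$PSPACE.

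The step I expect to be the main obstacle is the appeal to uniform Gandy--Spector, since one must check that the parameter-dependence of $\omega_{1}^{\text{CK},x}$ is harmless. As a more elementary alternative, one can write ``$x\in$ WO'' directly as ``$\exists e\in\omega$ such that $\{e\}^{\mathcal{O}^{x}}$ codes a halting computation of $P^{x}$ with output $1$'', exploiting that the halting time is recursive in $\mathcal{O}^{x}$, and then read off the $\Pi^{1}_{1}$-in-$x$ complexity directly from that of $\mathcal{O}^{x}$; the analogous statement for output $0$ then again yields the $\Delta^{1}_{1}$-ness of WO and the same contradiction.
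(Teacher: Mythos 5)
Your first part (WO is ITRM-decidable, hence by the simulation lemma ITTM-decidable with recursive snapshots, hence in SPACE$^{\text{ITTM}}_{\omega+1}$) is exactly the paper's argument. For the claim WO$\notin$P$_{+}$, however, you take a genuinely different route. The paper argues computationally: it invokes Gostanian's pseudo-well-orderings, i.e.\ reals $x$ with $x\in L_{\omega_{1}^{\text{CK},x}}$ such that $L_{\omega_{1}^{\text{CK},x}}$ wrongly believes $x\in$WO, and then modifies the hypothetical decision procedure $P$ into a program $Q$ that, still within the time bound $\omega_{1}^{\text{CK},x}$ (this needs a $\Sigma_{1}$-boundedness argument over the admissible level), writes out an ill-founded descending sequence; this forces $L_{\omega_{1}^{\text{CK},x}}$ to contain such a sequence for every ill-founded $x$, contradicting Gostanian. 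You instead observe that a halting run of length $<\omega_{1}^{\text{CK},x}$ is an element of $L_{\omega_{1}^{\text{CK},x}}[x]$ by admissibility, so ``$P^{x}\downarrow=i$'' is uniformly $\Sigma_{1}$ over that level, whence by Gandy--Spector both WO and its complement would be $\Pi^{1}_{1}$, making WO $\Delta^{1}_{1}$ and hence Borel --- contradicting $\Pi^{1}_{1}$-completeness. (In fact you only need one direction: the complement being $\Pi^{1}_{1}$ already contradicts the fact, used elsewhere in the paper, that WO is not $\Sigma^{1}_{1}$.) Both arguments are sound and ultimately exploit the same phenomenon --- that $L_{\omega_{1}^{\text{CK},x}}[x]$ cannot correctly certify ill-foundedness --- but your version outsources the combinatorics to a classical theorem of effective descriptive set theory and avoids constructing and time-analyzing the auxiliary program $Q$, at the price of invoking Gandy--Spector; the paper's version is more self-contained and more explicitly computational. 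Your fallback via $\mathcal{O}^{x}$ is sketchier and not needed given that the main argument goes through.
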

\begin{proof}
That WO is decidable with recursive snapshots follows from the fact that ITRMs can decide WO (see \cite{ITRM2}, Theorem \ref{ITRM recog ITTM simple}). 

To see that WO does not belong to P$_{+}$, recall the well-known fact that there are real numbers $x$ such that $x\in L_{\omega_{1}^{\text{CK},x}}$ and $L_{\omega_{1}^{\text{CK},x}}$ believes that $x$ codes a well-founded ordering, but this is in fact false, see e.g. \cite{Go}. 

Now suppose that WO is decidable by an ITTM-program $P$ that uses $<\omega_{1}^{\text{CK},x}$ many steps on input $x$. By a slight modification of $P$, we obtain a program $Q$ that works with the same time bound and outputs $1$ on input $x$ if and only if $x$ codes a well-ordering and otherwise outputs an ill-founded sequence for $x$. To see this, let $x\subseteq\omega$ be given. We run $P$ on $x$. If the output is $1$, we halt. Otherwise, we use $P$ to test for each $i\in\omega$ whether the ordering coded by $x$ below $i$ is a well-ordering, continue up to the first $i_{0}$ for which the answer is "no" (which must necessarily exist by assumption) and write it to the output tape. We then similarly look for the first $i_{1}$ that is smaller than $i_{0}$ in the sense of the ordering coded by $x$ such that the ordering below $i_{1}$ is ill-founded and write it to the output tape. Clearly, this will generate an ill-founded sequence for $x$, and it is easy to see that the time bound $\omega_{1}^{\text{CK},x}$ is still obeyed (the function mapping $i$ to the time it takes $P$ to check whether the ordering below $i$ is well-founded is $\Sigma_{1}$ over $L_{\omega_{1}^{\text{CK},x}}$ and hence bounded).

Thus, if there was such a program $P$, then $L_{\omega_{1}^{\text{CK},x}}$ would have to contain an ill-founded sequence for $x$ whenever $x$ codes an ill-founded ordering. But this contradicts the statement just made.

\end{proof}

We now turn to the question whether P$_{++}\subsetneq$PSPACE$_{++}$, which will be treated by an application of the idea of time-bounded halting problems used in Winter \cite{Wi1} to show a few other such strict inclusions.\footnote{However, on p. 78 of \cite{Wi1}, the author conjectures that such techniques are not helpful in resolving P$_{++}\subsetneq$PSPACE$_{++}$. It turns out that, in combination with the ITRM-ITTM-simulation idea, they are. We regard this as a good example how the investigations of different models of infinitary computability can fruitfully interact. (Note that ITRMs were only introduced over a year after \cite{Wi1} was defended.)}

Below, if $x\subseteq\omega$, $n(x)$ will denote the largest natural number $k$ satisfying $k\subseteq\omega$ if there is one, and $0$ if there is none.

\begin{thm}{\label{p++ vs pspace++}}
The set $X:=\{x\subseteq\omega:P_{n(x)}^{x}\text{ does not terminate in }<\omega_{2}^{\text{CK},x}\text{ many steps}\}$ (where $P_{n}$ denotes the $n$th ITTM-program as usual) is ITRM-decidable and thus ITTM-decidable with recursive snapshots, but does not belong to P$_{++}$.
\end{thm}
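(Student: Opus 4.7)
The proof plan splits into the two claims. For the ITRM-decidability, the plan is: on input $x$, an ITRM first reads $n := n(x)$ from the input (trivial), and then simulates the ITTM computation $P_n^x$, using the Welch loop-detection theorem (third bullet of the preliminaries) together with the ITRM's ability to handle questions relative to $L_\alpha[x]$ for $\alpha$ admissible in $x$ (the same kind of capability that gives ITRM-decidability of $\mathrm{WO}$). Since $\omega_2^{\text{CK},x}$ is a countable admissible well below $\lambda^x$, the behavior of $P_n^x$ up to stage $\omega_2^{\text{CK},x}$ is tractable on an ITRM: either a halting configuration is reached and detected by direct simulation, or it is not, and the ITRM outputs accordingly. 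Once ITRM-decidability is in place, Lemma~\ref{alpha ITRM ITTM simulation} (applied with $\alpha = \omega$) immediately yields decidability by an ITTM using only recursive snapshots.

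For the negative half, $X \notin P_{++}$, the plan is a direct diagonalization against the class $P_{++}$. Suppose, for contradiction, that some ITTM-program $Q$ decides $X$ within $\omega_1^{\text{CK},x} + \omega + 1$ steps on every input $x$. Define $Q'$ to be the program that, on input $x$, runs $Q$ on $x$ and then halts if $Q(x) = 1$ but enters an explicit infinite loop if $Q(x) = 0$. Fix $m$ with $P_m = Q'$, and choose $x_0$ with $n(x_0) = m$, taken simple enough (e.g., the characteristic function of $\{m\}$) that $\omega_k^{\text{CK},x_0} = \omega_k^{\text{CK}}$ for $k = 1, 2$, so in particular $\omega_1^{\text{CK},x_0} + \omega + 1 < \omega_2^{\text{CK},x_0}$.

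Both cases then yield contradictions. If $x_0 \in X$, then by definition of $X$ the program $P_m^{x_0}$ does not halt in $< \omega_2^{\text{CK},x_0}$ steps; but $Q^{x_0}$ returns $1$ in fewer than $\omega_1^{\text{CK},x_0} + \omega + 1$ steps, so $Q'^{x_0} = P_m^{x_0}$ halts immediately afterwards, comfortably inside $\omega_2^{\text{CK},x_0}$. If $x_0 \notin X$, then $P_m^{x_0}$ does halt in $< \omega_2^{\text{CK},x_0}$ steps; but then $Q^{x_0} = 0$, so by construction $Q'^{x_0} = P_m^{x_0}$ loops forever. Either case contradicts the hypothesis on $Q$.

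I expect the main obstacle to be the first half: one must pin down exactly why the halting behavior of an ITTM up to the second admissible relative to $x$ can be decided on an ITRM, a fact that depends on the precise computational strength of ITRMs (which puts an $L_{\omega_\omega^{\text{CK},x}}$-level search within their reach) rather than on anything specific to the particular set $X$. The diagonalization in the second half is then essentially a routine unwinding of definitions along the lines of Winter's time-bounded halting arguments in \cite{Wi1}, made to work here only because the ITRM-to-ITTM simulation gives us access to the halting predicate as an ITTM-decidable predicate of $x$ that is verified entirely inside the recursive-snapshot regime.
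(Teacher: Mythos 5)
Your diagonalization for $X\notin\mathrm{P}_{++}$ is exactly the paper's argument (the paper even diagonalizes against the weaker bound $\omega_2^{\mathrm{CK},x}$ in place of $\omega_1^{\mathrm{CK},x}+\omega+1$, which is all that is used), and the reduction of ``ITTM-decidable with recursive snapshots'' to ITRM-decidability via Lemma \ref{alpha ITRM ITTM simulation} is also as in the paper. Two small remarks on that half: the ``simple enough'' requirement on $x_0$ is unnecessary, since $\omega_1^{\mathrm{CK},x}+\omega+1<\omega_2^{\mathrm{CK},x}$ holds for \emph{every} $x$ by admissibility of $\omega_2^{\mathrm{CK},x}$; and with the paper's definition of $n(x)$ (the largest $k$ with $k\subseteq x$), the characteristic function of $\{m\}$ does not satisfy $n(x_0)=m$ --- take $x_0=m=\{0,\dots,m-1\}$ instead. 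Neither point affects the argument.

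The one place where your plan would not go through as written is the mechanism for ITRM-decidability of $X$. An ITRM cannot ``directly simulate'' an ITTM computation (the snapshots are reals, while registers hold naturals), and Welch's loop-detection theorem is the wrong tool here: it guarantees halting or a strong loop before $\Sigma^x$, which says nothing about whether $P_{n(x)}^x$ halts before the specific stage $\omega_2^{\mathrm{CK},x}$, and in particular gives no way to certify non-halting by that stage. What the paper does --- and what your closing paragraph correctly gestures at --- is to invoke the computational strength of ITRMs directly: an ITRM can compute in the oracle $x$ a code for $L_{\omega_3^{\mathrm{CK},x}}[x]$ and then evaluate over that structure the $\Sigma_1$ (and absolute) statement ``there is a halting computation of $P_{n(x)}^x$ of length $<\omega_2^{\mathrm{CK},x}$''; since any such computation is an element of $L_{\omega_3^{\mathrm{CK},x}}[x]$, this correctly decides membership in $X$. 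So the ingredient to cite is the $L_{\omega_\omega^{\mathrm{CK},x}}$-level strength of ITRMs (the same fact underlying ITRM-decidability of WO), not loop detection; with that substitution your proof coincides with the paper's.
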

\begin{proof}
To decide whether $x\in X$ for some given $x\subseteq\omega$ on an ITRM, compute a code for $L_{\omega_{3}^{\text{CK},x}}$ and use it to evaluate the $\Sigma_{1}$-statement that $P_{n(x)}^{x}$ terminates in $<\omega_{2}^{\text{CK},x}$ many steps.

To see that $X$ does not belong to P$_{++}$, suppose for a contradiction that $P$ is an ITTM-program that decides $X$ and runs for $<\omega_{2}^{\text{CK},x}$ many steps on input $x\subseteq\omega$. We modify $P$ a bit to an ITTM-program $Q$ that works as follows: On input $x$, it uses $P$ to decide whether or not $x\in X$. If $x\in X$, i.e. if $P_{n(x)}^{x}$ halts in $<\omega_{2}^{\text{CK},x}$ many steps, then $Q^{x}$ enters an infinite loop; otherwise, $Q^{x}$ halts. As $P^{x}$ runs for $<\omega_{2}^{\text{CK},x}$ many steps by assumption, $Q^{x}$ will halt in $<\omega_{2}^{\text{CK},x}$ many steps in the latter case. Let $n\in\omega$ be such that $Q=P_{n}$ and pick $x\subseteq\omega$ such that $n(x)=n$. 
Then $Q^{x}$ halts in $<\omega_{2}^{\text{CK},x}$ many steps if and only if $P_{n(x)}^{x}$ does not halt in $<\omega_{2}^{\text{CK},x}$ many steps, which, as $P_{n(x)}=P_{n}=Q$, is true if and only if $Q^{x}$ does not halt in $<\omega_{2}^{\text{CK},x}$ many steps, a contradiction.
\end{proof}

\bigskip
\noindent
\textbf{Remark}: The same technique works for any ITRM-computable function $f$ instead of $x\mapsto\omega_{1}^{\text{CK},x}$, such as $x\mapsto\omega_{n}^{\text{CK},x}$ for any $n\in\omega$, and in particular, it works for $f(x)=\omega^{\omega}$ and $f(x)=\omega_{1}^{\text{CK},x}$. Thus, P$\subsetneq$PSPACE and P$_{+}\subsetneq$PSPACE$_{+}$ can both be shown by similar arguments. The point of treating Lemma \ref{WO counterexample} separately was to give WO as a particularly natural example. 

We also remark that it was already observed by Winter that the complement of WO has a low nondeterministic ITTM-space complexity, see \cite{Wi1}, Proposition 7.18.


\section{Relations between space complexity classes for ITTMs}

We have seen above that there are sets with arbitrarily large uniform ITTM-decision times that are decidable with recursive snapshots. We know that WO has no countable bound on the decision times, but is ITTM-decidable with recursive snapshots. 

One may thus wonder: Do recursive snapshots restrict ITTMs at all? We show that this is indeed the case, and moreover, we will show that for any $\alpha<\lambda$, there are ITTM-decidable sets that are not contained in SPACE$^{\text{ITTM}}_{\alpha}$. To this end, we will show that, for any $\alpha<\lambda$, one can decide uniformly in $n$ and $x$ whether a given ITTM-program $P_{n}$ will produce a snapshot outside of $L_{\alpha}$ in the input $x$ and also whether $P_{n}^{x}$ will halt. Before we proceed, we will briefly explain the guiding idea.

As we recalled above, any ITTM-program on input $x$ will either halt in $<\lambda^{x}$ many steps or run into a strong loop by time $\Sigma^{x}$. It thus seems that there is an easy way to solve the ITTM-halting problem on an ITTM: Given $P_{n}$ and $x$, just run $P_{n}^{x}$ and keep track of all occuring snapshots in the following way: Whenever a snapshot $s$ occurs, write it to some portion of the scratch tape. If some later snapshot falls below $s$ in any component, delete $s$. Thus, if a snapshots appears that is already stored (i.e. it has appeared, but was not deleted), we know that $P_{n}^{x}$ is strongly looping and will thus halt. If this does not happen, then $P_{n}^{x}$ does halt, which will eventually also be noticed.

Clearly, there must be something wrong with this argument. It is not hard to see what: In general, there is no way to store all occuring snapshots on `some portion of the scratch tape': We do not know beforehand how many different snapshots (in the order-type of their appearance in the computation) there will be and thus do not know in how many portions to split the scratch tape. Moreover, there may be too many to do this effectively. 

On the other hand, if it is somehow ensured that all occuring snapshots can be stored, then the above works fine. Indeed, this is the idea both behind the solution of the ITRM-halting problem on ITTMs  due to Koepke and Miller \cite{ITRM2} and the ITTM-halting problem on OTMs (see L\"owe, \cite{Loe}), which have tapes of proper class length $\text{On}$ and can thus store any amount of ITTM-snapshots. 

Now, if we know that all snapshots of a computation $P_{n}^{x}$ are recursive, we can simply split the scratch tape into $\omega$ many portions and then, for each arising snapshot $s$, look for the minimal index $i$ of a (classical) Turing program $T_{i}$ that computes $s$. Then, we can store $s$ in the $i$th component of the scratch tape. 

In fact, we can do the above for any ITTM-program $P_{n}$ and any input $x$ as long as only recursive snapshots occur. Once this is violated, we can easily notice this and halt with an output indicating this.

Moreover, this approach works for any set $S$ of snapshots as long as there is an ITTM-computable bijection $f:\omega\rightarrow S$. (We may even let $S$ depend on the input $x$ and write $S_{x}$, as long as the required bijections are ITTM-computable uniformly in $x$.) As this is the case for $L_{\alpha}$ for all $\alpha<\lambda$, we obtain the following:

\begin{defini}
	For $x\subseteq\omega$, let $\mathcal{R}_{x}$ denote the set of ITTM-programs that use only recursive snapshots in the oracle $x$. Let $\bf{\mathcal{R}}$ denote the set of ITTM-programs (equivalently, their indices) that use only recursive snapshots in any real oracle.
\end{defini}

\begin{lemma}{\label{rec snapshots hp}}
	There is an ITTM-program $H$ such that the following holds: Given $n\in\mathcal{R}_{x}$, we have $H^{x}(n)\downarrow=1$ if and only if $P_{n}^{x}\downarrow$ and otherwise, $H^{x}(n)\downarrow=0$. Intuitively, $H$ solves the ITTM-halting problem for programs with recursive snapshots, uniformly in the oracle.
\end{lemma}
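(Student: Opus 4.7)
The plan is to carry out the strategy sketched in the paragraphs immediately preceding the lemma. The machine $H^{x}$ simulates $P_{n}^{x}$ step by step on a dedicated portion of its scratch tape and, in parallel, maintains on a second portion (split into $\omega$ many sub-tapes via a recursive bijection $\omega\times\omega\to\omega$) a dictionary of previously seen snapshots, keyed by the minimal Turing index that computes them. Since $n\in\mathcal{R}_{x}$, every snapshot $s$ of $P_{n}^{x}$ is recursive, so the key $i(s)$ is a well-defined natural number, and distinct snapshots produce distinct keys. This is exactly what makes the sub-tape indexing work as a genuine collision-free hash table.

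At each step of the simulation $H$ reads off the current snapshot $s$ (the concatenation of state, head positions and scratch tapes of the simulated $P_{n}^{x}$) and searches for $i(s)$ by running, in order $i=0,1,2,\ldots$, the $\Pi^{0}_{2}$-test ``$\forall k\,T_{i}(k){\downarrow}=s(k)$''; each such test is decidable on an ITTM in boundedly many steps (halting of a classical $T_{i}(k)$ is recognised at the first limit, and the outputs are then compared with $s(k)$), so the overall search for the minimal such $i$ also finishes in boundedly many ITTM-steps because $s$ is recursive. $H$ then inspects sub-tape $i(s)$: if it already holds a copy of $s$, a repetition has been detected and $H$ halts with output $0$; otherwise $H$ writes $s$ to that sub-tape. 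Finally $H$ scans the occupied sub-tapes and erases every stored $s'$ that is not componentwise weakly majorized by $s$. If at any stage the simulation reaches the halting state of $P_{n}^{x}$, $H$ halts with output $1$.

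Correctness follows from the Welch dichotomy recalled in the preliminaries: either $P_{n}^{x}$ halts in $<\lambda^{x}$ many steps, which $H$ detects and reports by outputting $1$; or $P_{n}^{x}$ enters a strong loop before time $\Sigma^{x}$, witnessed by a snapshot $s$ recurring at limit times $\tau_{0}<\tau_{1}$ with all intermediate snapshots weakly majorizing $s$. In the latter case the copy of $s$ placed on sub-tape $i(s)$ at time $\tau_{0}$ is never erased during $(\tau_{0},\tau_{1})$, so at $\tau_{1}$ the match is seen and $H$ outputs $0$; spurious detections are impossible, since by determinism a genuine recurrence of a snapshot forces the subsequent computation to replay the earlier segment and so likewise not halt. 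The main obstacle is purely bookkeeping at limit ordinals: because an ITTM takes cell-wise limsups at limits, every insertion and erasure must be committed to the hash table in a controlled ``one-shot'' manner, with all intermediate data kept in a buffer region that is explicitly zeroed at the start of each simulated step, so that the dictionary state seen by $H$ at any limit ordinal coincides with the intended one. This is standard ITTM programming, and with that care the construction goes through.
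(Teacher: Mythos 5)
Your construction is, in outline, exactly the one the paper uses: simulate $P_{n}^{x}$, key each (recursive) snapshot by the least index of a classical Turing program computing it, store it on the corresponding sub-tape, erase entries when the majorization condition is violated, and declare a loop when a stored snapshot recurs. The collision-freeness of the keys, the termination of the search for $i(s)$, and the one-shot bookkeeping at limits are all fine and are the right things to worry about at the implementation level.

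The genuine gap is in your soundness argument, specifically the claim that ``spurious detections are impossible, since by determinism a genuine recurrence of a snapshot forces the subsequent computation to replay the earlier segment and so likewise not halt.'' This is false for ITTMs, and it is precisely the subtlety that the strong-loop criterion exists to handle. A recurrence at times $\tau_{0}<\tau_{1}$ only yields that the snapshot at $\tau_{0}+\delta$ equals the snapshot at $\tau_{1}+\delta$, which becomes vacuous once $\delta$ absorbs the difference ($\tau_{0}+\delta=\tau_{1}+\delta$); at the first limit past the putative period the limsup rule can produce a configuration that never occurred inside it, after which the machine may halt. Concretely, a machine that toggles cell $0$ while oscillating its head repeats its time-$0$ snapshot at time $2$ (and the intermediate snapshot weakly majorizes it, so your dictionary entry survives), yet at time $\omega$ the limsup puts a $1$ in cell $0$ and the machine can halt; your $H$ would nevertheless output $0$ at step $2$. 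So soundness must come from verifying the actual strong-loop criterion: the recurrence has to be tested only at limit stages of the simulated computation, and survival of a stored snapshot has to witness the clause recalled in the preliminaries, namely that every cell containing $0$ in the stored snapshot contained $0$ at all intermediate stages. That forces the erasure rule to run in the opposite direction from the one you state: an entry $s'$ must be erased when a later snapshot shows a $1$ in a cell where $s'$ shows $0$, not when a later snapshot drops below $s'$. With the restriction to limit stages and the erasure condition flipped, detection fires exactly on strong loops, and both soundness and completeness follow from the Welch dichotomy; without these corrections the ``no spurious detections'' step is where the proof fails.
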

\begin{proof}
	Recall from above that any ITTM-program in any oracle either halts or enters a strong loop.
	
	Now, given $n$ and $x$ as in the statement of the lemma, $H$ proceeds as follows: Let $P_{n}^{x}$ run. For any snapshot that occurs, find the smallest $i$ such that the $i$th Turing program computes the snapshot (this exists by assumption). Now keep track of the occuring snapshots as follows: For each $i\in\omega$, mark the $i$th scratch tape cell with $1$ to indicate that the real number $r$ computed by the $i$th Turing program has occured as a snapshot during the computation of $P_{n}^{x}$ and that after that, no snapshot has arisen so far that is smaller than $r$ in any component. If a snapshot appears that is smaller than $r$ in any component, reset the content of the $i$th scratch tape cell to $0$.
	
	If $i$ occurs as the index of a program generating the current snapshot while there is a $1$ on the $i$th cell, we know that $P_{n}^{x}$ has entered a strong loop and $P_{n}^{x}$ will not halt; in this case, we halt with output $0$. Conversely, if $P_{n}^{x}$ does not halt, such a loop exists and will eventually be found. Thus, the program $H$ halts with output $0$ on the input $(n,x)$ when $P_{n}^{x}\uparrow$.
	
	On the other hand, when $P_{n}^{x}$ halts, then let $H(n,x)$ halt with output $1$. Clearly, $H$ is as desired.
\end{proof}

\begin{lemma}{\label{rec snapshots decidable}}
	$\mathcal{R}_{x}$ is ITTM-writable, uniformly in $x$.
\end{lemma}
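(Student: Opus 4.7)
The plan is to refine the loop-detection bookkeeping of Lemma \ref{rec snapshots hp} with an online check that each snapshot encountered is actually recursive, and then to run the resulting decision procedure in parallel for every program index. First I would build an ITTM subroutine $\textsf{Rec}$ that, given a real $s$ on the tape, returns either ``non-recursive'' or the minimal index $i$ of a classical Turing program $T_i$ computing $s$. The routine simulates $T_i(m)$ for all pairs $(i,m)\in\omega\times\omega$ in parallel on a partition of the scratch tape for $\omega$ many ITTM steps; since each such classical computation either halts in finitely many steps or diverges, at limit time $\omega$ every entry has a definitive status, and a final sweep picks out the least $i$ (if any) for which $T_i(m)=s(m)$ for all $m$.

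With $\textsf{Rec}$ available, I would define a decision procedure $D_n$ for ``$n\in\mathcal{R}_x$'' by simulating $P_n^x$ step by step, feeding each new snapshot to $\textsf{Rec}$. If $\textsf{Rec}$ ever reports non-recursiveness, $D_n$ halts with output $0$. Otherwise $D_n$ uses the returned minimal index $i$ to perform exactly the bookkeeping from the proof of Lemma \ref{rec snapshots hp}: mark cell $i$ of a dedicated tape region, clear cell $j$ whenever the new snapshot is strictly below the one previously stored at index $j$, and halt with output $1$ either when $P_n^x$ halts or when $\textsf{Rec}$ returns an index whose cell is already marked (a strong loop). Correctness rests on Welch's dichotomy: $P_n^x$ either halts in $<\lambda^x$ many steps or enters a strong loop before $\Sigma^x$, and in the loop case all recurring snapshots coincide with ones seen earlier, so not detecting a non-recursive snapshot before loop detection means every snapshot of $P_n^x$ was recursive.

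Finally I would run $D_0,D_1,D_2,\dots$ simultaneously on disjoint infinite portions of the scratch tape, copying the output bit of each $D_n$ to the $n$-th cell of the output tape when $D_n$ terminates, and halting once every $D_n$ has terminated; this produces the characteristic function of $\mathcal{R}_x$. The step I expect to be the real obstacle is verifying that this combined program actually halts on every oracle $x$, i.e.\ that $\sup_{n\in\omega}\tau_n(x)$ is an $x$-writable ordinal, where $\tau_n(x)$ is the halting time of $D_n$ on $x$; the point will be to bound each $\tau_n(x)$ uniformly by an ordinal $\Sigma_1$-definable over a suitable admissible level below $\lambda^x$, so that the supremum remains below $\lambda^x$ and the whole procedure is genuinely ITTM-writable uniformly in $x$.
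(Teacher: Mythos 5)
Your proposal is correct and follows essentially the same route as the paper: simulate $P_n^x$, test each snapshot online for recursiveness (rejecting upon failure), and otherwise detect halting or a strong loop via the minimal-Turing-index bookkeeping of Lemma \ref{rec snapshots hp}, accepting in that case. The only point where you go beyond the paper is the final parallelization over all $n$; your worry there is legitimate but resolvable exactly as you indicate (admissibility of $\lambda^x$ bounds $\sup_n\tau_n(x)$ below $\lambda^x$, after which the standard simulate-and-detect-stabilization technique applies), and the paper's own proof leaves this step entirely implicit.
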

\begin{proof}
 To decide whether $n\in\omega$ belongs to $\mathcal{R}_{x}$, let $H(n,x)$ run and in parallel, run $P_{n}^{x}$ and check for each snapshot that occurs during the computation whether or not it is recursive (this is easily possible on an ITTM). When a non-recursive snapshot is detected, halt with output $0$. Otherwise, $P_{n}^{x}$ uses only recursive snapshots and thus, the $H$ will successfully detect either a strong loop or the halting of $P_{n}^{x}$. In both cases, we can be sure that only recursive snapshots will occur and return $1$.
\end{proof}

\begin{corollary}{\label{semidecidable with rec snapshots}}
	If $X\subseteq\mathfrak{P}(\omega)$ is ITTM-semidecidable with recursive snapshots, then $X$ is ITTM-decidable.
\end{corollary}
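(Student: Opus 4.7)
The plan is to reduce the statement directly to Lemma \ref{rec snapshots hp}. Let $X\subseteq\mathfrak{P}(\omega)$ be ITTM-semidecidable with recursive snapshots, witnessed by an ITTM-program $P$ of index $n$; thus $P^{x}\downarrow$ iff $x\in X$, and on every input $x$ all snapshots of $P^{x}$ are recursive. The second condition is exactly the statement that $n\in\mathcal{R}_{x}$ for all $x\subseteq\omega$.

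First I would construct a deciding program $Q$ which, on input $x$, simulates the universal halting-problem solver $H$ of Lemma \ref{rec snapshots hp} on $(n,x)$ and returns whatever $H^{x}(n)$ outputs. Since $n\in\mathcal{R}_{x}$ for every $x$, the conclusion of Lemma \ref{rec snapshots hp} applies uniformly: $H^{x}(n)=1$ iff $P^{x}\downarrow$, and $H^{x}(n)=0$ otherwise. Combining this with the semidecidability equivalence $P^{x}\downarrow\Leftrightarrow x\in X$ shows that $Q^{x}$ is a total decision procedure for $X$.

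I do not expect any substantive obstacle, since the real work is already packaged in Lemma \ref{rec snapshots hp}. The only subtlety worth flagging is the intended reading of ``ITTM-semidecidable with recursive snapshots'': the semi-decider must be required to produce only recursive snapshots on \emph{all} inputs (including those $x\notin X$ on which it diverges), so that $n\in\mathcal{R}_{x}$ holds uniformly in $x$. If the definition were relaxed to allow arbitrary snapshots on inputs $x\notin X$, one would instead first invoke Lemma \ref{rec snapshots decidable} to verify membership in $\mathcal{R}_{x}$ before calling $H$; under the natural reading, however, the corollary is immediate.
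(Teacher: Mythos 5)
Your proposal is correct and matches the paper's own proof, which likewise just applies the halting-problem solver $H$ of Lemma \ref{rec snapshots hp} to the semi-deciding program on each input. Your reading of the definition is also the intended one (the program must use recursive snapshots on \emph{all} inputs), so the reduction goes through exactly as you describe.
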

\begin{proof}
	Let $P$ be an ITTM-program with recursive snapshots that semidecides $X$. Then $H$ can be used to determine whether $P$ halts on a given input $x\subseteq\omega$, and thus whether $x\in X$.
\end{proof}

\begin{thm}{\label{ITTM decidable non recursive snapshots}}
	There is a set $X\subseteq\mathfrak{P}(\omega)$ such that $X$ is ITTM-decidable, but not with recursive snapshots.
\end{thm}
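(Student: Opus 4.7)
My plan is a recursion-theoretic diagonal against ITTM-programs using only recursive snapshots, leveraging the uniform halting oracle $H$ from Lemma \ref{rec snapshots hp} and the decidability of membership in $\mathcal{R}_{x}$ from Lemma \ref{rec snapshots decidable}. The candidate set I have in mind is
\[
X := \{x \subseteq \omega : n(x) \in \mathcal{R}_{x} \text{ and } P_{n(x)}^{x} \downarrow \text{ with output } 1\}.
\]

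The first step is to verify that $X$ is ITTM-decidable. Given $x$, I first use Lemma \ref{rec snapshots decidable} to decide whether $n(x) \in \mathcal{R}_{x}$, outputting $0$ if not; otherwise I invoke $H$ to decide whether $P_{n(x)}^{x}$ halts, again outputting $0$ if not; and if both tests succeed, I simulate $P_{n(x)}^{x}$ (which is now guaranteed to halt and to use only recursive snapshots) and return its output bit. Crucially, the decider itself is not claimed to use recursive snapshots; indeed, $H$ internally manipulates indices of Turing programs for arbitrary recursive reals, and this is exactly the asymmetry the theorem exploits.

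For the diagonalization, assume for contradiction that some $P_{k}$ decides $X$ using only recursive snapshots on every input. I form $Q$ by running $P_{k}^{x}$ and then flipping its output bit; since the trailing bit-flip only produces snapshots obtained from the final snapshot of $P_{k}^{x}$ by altering a single cell, $Q$ still uses only recursive snapshots on every input. Fix $k' \in \omega$ with $Q = P_{k'}$, so that $k' \in \mathcal{R}_{x}$ for every $x$. Choose $x$ with $n(x) = k'$ (for instance $x = \{0, 1, \dots, k' - 1\}$). Then
\[
x \in X \iff P_{k'}^{x} \downarrow \text{ with output } 1 \iff Q^{x} = 1 \iff P_{k}^{x} = 0 \iff x \notin X,
\]
which is the desired contradiction.

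The main obstacle I foresee is the bookkeeping in step one: spelling out cleanly that the $X$-decider can interleave the $\mathcal{R}_{x}$-test with the halting query to $H$ and the subsequent simulation without the auxiliary machinery breaching the termination guarantees inherited from Lemma \ref{rec snapshots hp}. The verification that $Q$ inherits the recursive-snapshot property from $P_{k}$ is routine but must be stated explicitly, since the entire diagonalization rests on it.
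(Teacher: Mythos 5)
Your proposal is correct and follows essentially the same route as the paper: decide a diagonal set keyed on $n(x)$ using Lemma \ref{rec snapshots decidable} and the halting solver $H$ of Lemma \ref{rec snapshots hp}, then diagonalize against a hypothetical recursive-snapshot decider. The only difference is cosmetic --- the paper builds the negation into the definition of $X$ (``does not halt with output $1$''), while you keep $X$ positive and introduce the output-flipping wrapper $Q$, correctly noting that $Q$ inherits the recursive-snapshot property.
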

\begin{proof}
	For $x\subseteq\omega$, let $n(x)$ be maximal such that $n\subseteq x$, if this $n$ is a natural number, and let $n(x)=0$ when $x=\omega$.
	
	Now let $X$ be the set of $x\subseteq\omega$ such that $n(x)\in\mathcal{R}_{x}$ (i.e. $P_{n(x)}^{x}$ only generates recursive snapshots) and $P_{n(x)}^{x}$ does not halt with output $1$ (i.e. it either halts with an output different from $1$ or it does not halt at all).
	
	Clearly, $X$ is ITTM-decidable: First, we can decide $\mathcal{R}_{x}$ by Lemma \ref{rec snapshots decidable}. Then, we can use $H$ to determine whether $P_{n(x)}^{x}$ will halt. If not, output $1$. Otherwise, run $P_{n(x)}^{x}$ and output $0$ when the output is $1$ and otherwise output $1$.
	
	Now suppose that $n\in\bf{\mathcal{R}}$ is such that $P_{n}$ decides $X$. Pick $x\subseteq\omega$ such that $n(x)=n$. Then $x\in X\leftrightarrow P_{n}^{x}\downarrow=1\leftrightarrow P_{n(x)}^{x}\downarrow=1\leftrightarrow x\notin X$, where the last implication is an equivalence because $P_{n}^{x}$ uses only recursive snapshots by assumption. This contradiction shows that $P_{n}$ cannot exist.
\end{proof}

In the investigations of space complexity for ITTMs, there seems to have been no result so far showing that there are ITTM-decidable problems that are not in SPACE$^{\text{ITTM}}_{\alpha}$ for any $\alpha>\omega$. We note that the above proof can be generalized to yield some information about this.

\begin{thm}{\label{ITTM SPACE ALPHA}}
Let $\alpha<\lambda$. Let $X_{\alpha}$ be the set of all real numbers $x$ such that all snapshots of $P_{n(x)}^{x}$ belong to $L_{\alpha}[x]$ and $P_{n(x)}^{x}$ does not halt with output $1$. Then $X_{\alpha}$ is ITTM-decidable, but does not belong to SPACE$^{\text{ITTM}}_{\alpha}$.
\end{thm}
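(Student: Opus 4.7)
The plan is to mimic the argument for Theorem \ref{ITTM decidable non recursive snapshots}, replacing ``recursive snapshots'' by ``snapshots in $L_\alpha[x]$'' throughout. The hypothesis $\alpha<\lambda$ plays exactly the role that the existence of an effective enumeration of the recursive reals played before, since it guarantees an ITTM-computable enumeration of the snapshots in $L_\alpha[x]$ uniformly in $x$.

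Specifically, since $\alpha<\lambda\leq\lambda^{x}$ for every $x\subseteq\omega$, one can ITTM-compute a code for $L_\alpha[x]$ uniformly in $x$, and extract from it an enumeration $(r^{x}_{i}:i\in\omega)$ of all snapshots lying in $L_\alpha[x]$. Plugging this into the proof of Lemma \ref{rec snapshots hp} in place of the enumeration of recursive reals yields an ITTM-program $H_\alpha$ such that, whenever $(n,x)$ has the property that every snapshot of $P_{n}^{x}$ lies in $L_\alpha[x]$, $H_\alpha^{x}(n)$ correctly decides whether $P_{n}^{x}\downarrow$: when a snapshot $s$ of $P_{n}^{x}$ appears, locate the least $i$ with $s=r^{x}_{i}$ and use the $i$th portion of the tape (via Lemma \ref{ITTM rec multitape simulation}) as a flag for ``$s$ has appeared and no later snapshot is coordinatewise strictly below it'', resetting flags as required and detecting a strong loop when an $i$ whose flag is set reappears. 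Analogously to Lemma \ref{rec snapshots decidable}, running $H_\alpha$ in parallel with a membership check of each snapshot of $P_{n}^{x}$ against the enumeration $(r^{x}_{i})$ allows one to ITTM-decide whether $(n,x)$ actually satisfies the hypothesis in the first place.

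To decide $X_\alpha$ on input $x$, extract $n(x)$, decide whether $P_{n(x)}^{x}$ uses only snapshots in $L_\alpha[x]$; if not, reject; otherwise use $H_\alpha$ to decide whether $P_{n(x)}^{x}$ halts; if it does not, accept, and if it does, simulate it and accept iff the output differs from $1$. For the negative half, suppose for contradiction that $P_n$ decides $X_\alpha$ and, on every input $y$, produces only snapshots in $L_\alpha[y]$. Pick $x\subseteq\omega$ with $n(x)=n$; the first clause in the definition of $X_\alpha$ then holds automatically for $x$, so $x\in X_\alpha\iff P_{n(x)}^{x}\text{ does not halt with output }1\iff P_{n}^{x}\downarrow\neq 1$, while $P_n$ deciding $X_\alpha$ gives $x\in X_\alpha\iff P_{n}^{x}\downarrow=1$, a contradiction.

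The main obstacle is the uniform construction of the enumeration $(r^{x}_{i})$ and the verification that the flag-bookkeeping fits on a single tape; both are routine consequences of the standard writability facts below $\lambda$ together with Lemma \ref{ITTM rec multitape simulation}, and once in place, the diagonalization is a direct repetition of that in Theorem \ref{ITTM decidable non recursive snapshots}.
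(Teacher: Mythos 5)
Your proposal is correct and follows essentially the same route as the paper: the paper likewise observes that $\alpha<\lambda$ yields an ITTM-writable code for $L_{\alpha}$ and hence a uniformly-in-$x$ computable bijection $f:\omega\rightarrow\mathfrak{P}(\omega)\cap L_{\alpha}[x]$, and then invokes the ``obvious analogues'' of Lemma \ref{rec snapshots hp}, Lemma \ref{rec snapshots decidable} and Theorem \ref{ITTM decidable non recursive snapshots}. You have merely spelled out those analogues and the diagonalization in more detail than the paper does.
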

\begin{proof}
Since $\alpha<\lambda$, there is an ITTM-writable code for $L_{\alpha}$. Thus, there is an ITTM-program $P_{\alpha-\text{test}}$ that decides $\mathfrak{P}(\omega)\cap L_{\alpha}[x]$, uniformly in $x$. Moreover, there is a uniformly in $x$ ITTM-computable bijection $f:\omega\rightarrow\mathfrak{P}(\omega)\cap L_{\alpha}[x]$. We thus obtain the obvious analogues of Lemma \ref{rec snapshots hp} and 
Lemma \ref{rec snapshots decidable}, and thus of Theorem \ref{ITTM decidable non recursive snapshots}.
\end{proof}

 To the best of our knowledge, no nontrivial proper inclusion relations between the classes SPACE$^{\text{ITTM}}_{\alpha}$ are known so far for different values of $\alpha$ bigger than $\omega$. 
(Clearly, we have SPACE$^{\text{ITTM}}_{\alpha}\subseteq$SPACE$^{\text{ITTM}}_{\beta}$ for $\alpha<\beta$, see e.g. \cite{Wi1}). 
We will now investigate the above construction a bit further, which will allow us to show that SPACE$^{\text{ITTM}}_{\alpha}\subsetneq$SPACE$^{\text{ITTM}}_{\lambda}$ for all $\alpha<\lambda$.

\begin{lemma}{\label{bounded bound violation time}}
Let $P$ be an ITTM-program, $x\subseteq\omega$, $\alpha<\lambda^{x}$, and suppose that $P^{x}$ produces a snapshot that is not contained in $L_{\alpha}[x]$. Then, letting $\tau$ be the first time at which such a snapshot appears in the computation of $P^{x}$, $\tau+\omega$ is clockable in $x$ and consequently, we have $\tau<\lambda^{x}$. 
\end{lemma}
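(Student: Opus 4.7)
The plan is to construct, using oracle $x$, an ITTM-program witnessing that $\tau+\omega$ is clockable in $x$; the consequence $\tau<\lambda^{x}$ then follows from Welch's theorem recalled in the preliminaries, which puts every clockable-in-$x$ ordinal strictly below $\lambda^{x}$.

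First, since $\alpha<\lambda^{x}$, there is an ITTM-program $R$ that on oracle $x$ writes a code $c$ for $L_{\alpha}[x]$ and halts in some clockable time $\gamma_{0}<\lambda^{x}$; in particular, membership of a given real in $L_{\alpha}[x]$ is ITTM-decidable uniformly in $x$ once $c$ is available. I would then build an ITTM-program $Q$ that, on oracle $x$, proceeds in three phases. In the preparatory phase, $Q$ uses $R$ to write $c$ onto a dedicated scratch tape. In the simulation phase, $Q$ directly simulates $P^{x}$ on further scratch tapes, one step of $Q$ per step of $P^{x}$, while a parallel monitor tape compares the current snapshot of $P^{x}$ against $c$ in real time. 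As soon as $Q$ detects a snapshot outside $L_{\alpha}[x]$ -- which, by definition of $\tau$, first happens when the simulation reaches time $\tau$ -- $Q$ enters an $\omega$-step idle loop and then halts.

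The crux is the timing analysis. After the preparatory phase of length $\gamma_{0}$, the real-time simulation hits the first violating snapshot after a further $\tau$ steps, and then $\omega$ idle steps pass before $Q$ halts. By arranging the preparatory phase to be completed in the background during the first few simulation steps -- or equivalently, by exploiting the standard absorption identity $n+\omega=\omega$ for finite $n$ to cancel the constant-per-step bookkeeping overhead -- the total halting time can be arranged to be exactly $\tau+\omega$. In the degenerate case where $\tau$ is so small that $\tau\leq\gamma_{0}$, both $\tau$ and $\tau+\omega$ lie below a clockable ordinal below $\lambda^{x}$, so the clockability of $\tau+\omega$ and the consequence $\tau<\lambda^{x}$ are immediate, since the clockable-in-$x$ ordinals form an initial segment with supremum $\lambda^{x}$.

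The main obstacle will be the bookkeeping needed to ensure genuinely real-time simulation and snapshot-monitoring: each step of $Q$ must advance the simulation by one step of $P^{x}$ while simultaneously advancing the snapshot comparison by a bounded amount, and limit steps of $Q$ must reproduce the correct liminf behaviour of $P^{x}$ at limit times. This is handled by dedicating separate scratch tapes to the simulated tape(s) of $P^{x}$, the static code $c$, and the stepwise comparison register, so that each step of $Q$ involves only a bounded amount of local tape activity -- exactly the kind of direct multi-tape simulation already used in Lemma \ref{ITTM rec multitape simulation} and Lemma \ref{alpha ITRM ITTM simulation}.
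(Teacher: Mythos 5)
Your construction is the same as the paper's: write a code $c$ for $L_{\alpha}[x]$ (possible since $\alpha<\lambda^{x}$), simulate $P^{x}$ while testing each snapshot against $c$, and halt at the first violation. The paper's proof is exactly this short sketch and makes no attempt at a finer timing analysis; the conclusion $\tau<\lambda^{x}$ only needs the halting time of this procedure to be an $x$-clockable ordinal $\geq\tau$, since all $x$-clockable ordinals lie below $\lambda^{x}$.

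Two of the details you add in order to justify the exact value $\tau+\omega$ are, however, wrong. First, the clockable ordinals do \emph{not} form an initial segment: by Hamkins--Lewis there are gaps in the clockable ordinals, so ``$\tau+\omega$ lies below a clockable ordinal'' does not yield ``$\tau+\omega$ is clockable''. (It is the \emph{writable} ordinals that are closed downwards; since their supremum is also $\lambda^{x}$, lying below a clockable ordinal still gives $\tau<\lambda^{x}$, which is the only consequence of the lemma actually used later.) Second, the monitoring cannot be ``real time'' with ``a bounded amount of local tape activity'' per step: a snapshot is an infinite binary string, and deciding whether it occurs among the reals coded by $c$ takes at least $\omega$ steps per snapshot, so the checks must be pipelined alongside the simulation rather than completed within individual steps. (A helpful observation here is that a snapshot can first leave $L_{\alpha}[x]$ only at a limit stage, since a successor step changes only finitely many bits.) Even with pipelining, the halting time has the form $\gamma_{0}+\tau+\delta$ with $\gamma_{0}$ the preparation time and $\delta$ of order $\omega$; the segment $\gamma_{0}$ is not absorbed unless $\tau$ is large enough to absorb it, so the construction does not literally clock $\tau+\omega$. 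None of this affects the substantive conclusion $\tau<\lambda^{x}$, but if you want the literal clocking claim you must argue it directly rather than via the false initial-segment assertion.
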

\begin{proof}
First, compute a code for $L_{\alpha}[x]$. We can then use this to test for a given snapshot whether it is contained in $L_{\alpha}[x]$. 
Now let $P^{x}$ run and test for each snapshot whether it is contained in $L_{\alpha}[x]$. Once this fails, halt.
\end{proof}

\begin{lemma}{\label{bounded looping time}}
Let $P$ be an ITTM-program, $x\subseteq\omega$, $\alpha<\lambda^{x}$ and suppose that $P^{x}$ only uses snapshots in $L_{\alpha}[x]$. Then $P^{x}$ either halts in $<\lambda^{x}$ many steps or runs into a strong loop in $<\lambda^{x}$ many steps.
\end{lemma}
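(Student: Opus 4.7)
The plan is to adapt the halting-problem argument of Lemma \ref{rec snapshots hp}, and its $L_{\alpha}$-analogue implicit in Theorem \ref{ITTM SPACE ALPHA}, so as to bound the halting-or-looping time of $P^{x}$ rather than merely decide halting. Since $\alpha<\lambda^{x}$, a code for $L_{\alpha}[x]$ is ITTM-writable in oracle $x$, and we can therefore fix a uniformly $x$-ITTM-computable bijection $f\colon\omega\to L_{\alpha}[x]\cap\mathfrak{P}(\omega)$ enumerating all possible snapshots.

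First I would construct a program $H_{\alpha}$ which, in oracle $x$, simulates $P^{x}$ step by step while keeping a book-keeping tape exactly in the style of Lemma \ref{rec snapshots hp}: whenever a snapshot $s$ occurs during the simulation, locate its index $i=f^{-1}(s)$ and mark the $i$th book-keeping cell with $1$; whenever a later snapshot is strictly below $s$ in some component, reset that cell to $0$. The simulator halts with output $1$ as soon as $P^{x}$ is seen to halt, and with output $0$ as soon as a snapshot arrives whose book-keeping cell is already marked, signalling that $P^{x}$ has just closed a strong loop. Under the standing hypothesis that every snapshot of $P^{x}$ lies in $L_{\alpha}[x]$, the index $i=f^{-1}(s)$ is always defined, so the reasoning of Lemma \ref{rec snapshots hp} shows that $H_{\alpha}^{x}$ really does halt.

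From this the conclusion would be essentially immediate: since $H_{\alpha}^{x}$ is a halting ITTM-computation in oracle $x$, its halting time is $x$-clockable and hence strictly below $\lambda^{x}$. Because $H_{\alpha}$ simulates $P^{x}$ step by step, the number of simulator steps is at least the number of simulated steps of $P^{x}$ that the simulator had to process before detecting the event; consequently, the least time $\tau$ at which $P^{x}$ halts or at which a strong loop of $P^{x}$ first closes satisfies $\tau<\lambda^{x}$, which is the statement of the lemma.

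The hardest part, as always with such ITTM simulations, is to verify correctness at limit stages: one must argue that the book-keeping state of $H_{\alpha}^{x}$ at its own limit times encodes the ITTM-liminf configuration of $P^{x}$ at the corresponding simulated limit time, so that a genuine strong loop of $P^{x}$ triggers the detection mechanism precisely when it closes rather than being missed or reported spuriously. This is handled as in the simulations underlying Lemma \ref{rec snapshots hp} and Theorem \ref{ITTM SPACE ALPHA}: the flag cells are arranged so that they transition cleanly under liminf, and the slot storing the current snapshot of $P^{x}$ respects the ITTM limit rule. Granting the same limit-stage analysis as in those earlier lemmas, the simulation is faithful at every stage and the required bound on $\tau$ follows.
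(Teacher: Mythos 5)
Your proposal is correct and follows essentially the same route as the paper: both arguments write a code for $L_{\alpha}[x]$ (available since $\alpha<\lambda^{x}$), use it to index the possible snapshots by natural numbers, run the book-keeping of Lemma \ref{rec snapshots hp} to detect halting or the closing of a strong loop, and then conclude from the clockability of the detector's halting time that the relevant stage of $P^{x}$ lies below $\lambda^{x}$. The paper's proof is just a more compressed version of the same simulation argument.
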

\begin{proof}
We start by computing a code $c$ for $L_{\alpha}[x]$. Each occuring snapshot is coded in $c$ by some natural number. Now, as in the solution of the halting problem for ITTMs with recursive snapshots, we can run $P^{x}$ and store the snapshots that have occured so far by the natural numbers coding them in $c$ and thus detect strong loops in the same way. As soon as a strong loop is detected, halt. Thus, there is a clockable ordinal after the first repitition in the strong loop of $P^{x}$. On the other hand, if $P^{x}$ halts, it does so in $<\lambda^{x}$ many steps. 
\end{proof}

\begin{thm}{\label{computable snapshots}}
Let $\omega<\alpha<\lambda$. Then there is a set $X\subseteq\omega$ such that $X$ is ITTM-decidable with snapshots in $L_{\lambda}$, but not with snapshots in $L_{\alpha}$.
Thus, we have SPACE$^{\text{ITTM}}_{\alpha}\subsetneq$SPACE$^{\text{ITTM}}_{\lambda}$.
\end{thm}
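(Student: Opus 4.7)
The plan is to take $X := X_\alpha$ from Theorem \ref{ITTM SPACE ALPHA}, which is already known to be ITTM-decidable and not to lie in SPACE$^{\text{ITTM}}_\alpha$, and to establish the strict inclusion by exhibiting a decision procedure for $X_\alpha$ whose snapshots all lie in $L_\lambda[x]$. Combined with the negative half of Theorem \ref{ITTM SPACE ALPHA}, this yields the strict inclusion SPACE$^{\text{ITTM}}_{\alpha}\subsetneq$SPACE$^{\text{ITTM}}_{\lambda}$.

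First I would note that, because $\alpha < \lambda$, there is an ITTM-writable code $c_\alpha$ for $L_\alpha$; writing $c_\alpha$ takes $<\lambda$ many steps, and by admissibility of $\lambda$ uses only snapshots in $L_\lambda$. Once $c_\alpha$ is available on a reserved portion of the tape, simulate $P_{n(x)}^x$ step by step while concurrently doing two things: (i) using $c_\alpha$, check each simulated snapshot $s$ for membership in $L_\alpha[x]$, halting immediately with output ``$x \notin X_\alpha$'' at the first failure; and (ii) using the natural-number enumeration of $L_\alpha[x]$ induced by $c_\alpha$, maintain, on another reserved portion of the tape, a bookkeeping bitvector as in the proof of Lemma \ref{bounded looping time}, recording which indices code snapshots that have occurred but have not since been invalidated by a componentwise strictly smaller snapshot, so that a strong loop can be detected. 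If a strong loop is found then $P_{n(x)}^x$ does not halt, so we output ``$x \in X_\alpha$''; if $P_{n(x)}^x$ halts, we read the output and declare $x \in X_\alpha$ iff the output is not $1$.

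By Lemma \ref{bounded bound violation time}, any violation in (i) occurs in $<\lambda^x$ many steps if it occurs at all; by Lemma \ref{bounded looping time}, in the absence of any violation $P_{n(x)}^x$ halts or enters a strong loop in $<\lambda^x$ many steps, and the loop is detected in the same time. Hence the whole procedure terminates in $<\lambda^x$ many steps on every input $x$. The composite snapshot of the simulating machine at any moment has three components: the fixed code $c_\alpha \in L_\lambda$, the current simulated snapshot (which lies in $L_\alpha[x] \subseteq L_\lambda[x]$ until we detect a violation and halt), and an $\omega$-bit bookkeeping vector (trivially in $L_{\omega+1} \subseteq L_\lambda$). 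Under the standard interleaved tape encoding, their combination is again a subset of $\omega$ lying in $L_\lambda[x]$.

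The main obstacle is this last verification: one must resist the easy but useless bound $L_{\lambda^x}[x]$ that merely follows from running in $<\lambda^x$ many steps, and instead argue by careful component analysis that each piece of the composite snapshot individually lies in $L_\lambda[x]$. This uses crucially that $\alpha < \lambda$ (so $c_\alpha \in L_\lambda$), that by hypothesis the current simulated snapshot stays in $L_\alpha[x]$ until we decide to halt, and that $\lambda$ absorbs the simple arithmetic operations needed to assemble these components into a single subset of $\omega$.
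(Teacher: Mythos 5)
Your overall architecture (write a code for $L_\alpha$, simulate $P_{n(x)}^{x}$, test each snapshot for membership in $L_\alpha[x]$, and bookkeep occurred snapshots to detect strong loops) is exactly the paper's, and the appeal to Lemmas \ref{bounded bound violation time} and \ref{bounded looping time} is the right one. But there is a genuine gap in the choice of diagonal set and in your final ``component analysis''. You take $X_\alpha$ from Theorem \ref{ITTM SPACE ALPHA}, whose inputs are arbitrary reals $x$. For such $x$ the two lemmas only bound the violation/halting/looping times by $\lambda^{x}$, and $\lambda^{x}$ can be strictly larger than $\lambda$. Consequently the data your procedure manipulates at late stages --- the first offending snapshot in branch (i), and above all the bookkeeping bitvector in branch (ii) --- are only guaranteed to lie in $L_{\lambda^{x}}[x]$, not in $L_{\lambda}[x]$. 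The claim that the bookkeeping vector is ``trivially in $L_{\omega+1}$'' is false: being a subset of $\omega$ does not place a set in $L_{\omega+1}$, which contains only the arithmetic reals. That vector encodes which elements of $L_\alpha[x]$ have appeared as snapshots in an initial segment of the simulated computation, so its constructibility rank is governed by the length of that initial segment; when $\lambda^{x}>\lambda$, the level $L_{\lambda^{x}}[x]$ typically contains reals outside $L_{\lambda}[x]$, and nothing prevents the bookkeeping vector (or the first offending snapshot) from being one of them. This is precisely the ``easy but useless bound'' you warn against, and your component analysis does not actually escape it.

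The paper avoids the problem by diagonalizing over a set $X\subseteq\omega$ of natural numbers (note that the statement says $X\subseteq\omega$, not $X\subseteq\mathfrak{P}(\omega)$): for an input $x=n\in\omega$ one has $\lambda^{x}=\lambda$, so every initial segment of the simulated computation up to the second occurrence of the looping snapshot, and hence every bookkeeping vector and every offending snapshot, lies in $L_{\lambda}$. A direct diagonal argument ($n\in X\leftrightarrow P_{n}^{n}\downarrow=1\leftrightarrow n\notin X$ for a putative decider $P_{n}$ with snapshots in $L_\alpha$) then replaces the appeal to Theorem \ref{ITTM SPACE ALPHA}. To repair your proof, restrict the diagonal set to natural-number inputs and rerun your verification with $\lambda^{n}=\lambda$; the rest of your argument then goes through.
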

\begin{proof}
Let $X:=\{n\in\omega:P_{n}^{n}\text{ generates a snapshot outside of }L_{\alpha}\vee P_{n}^{n}\text{ only produces snapshots in }L_{\alpha}\text{ and runs into a strong loop}\vee P_{n}^{n}\downarrow\neq 1\text{ and uses only snapshots in }L_{\alpha}\}$. (Note that each natural number is a set of natural numbers and thus also a real number.)

Suppose that $X$ was decidable by the program $P_{n}$ using only snapshots in $L_{\alpha}$. Then $n\in X\leftrightarrow P_{n}^{n}\downarrow=1\leftrightarrow n\notin X$, a contradiction. Thus, $X$ is not decidable with snapshots in $L_{\alpha}$.

Clearly, $X$ is ITTM-decidable. 
We show that this decision procedure only uses snapshots in $L_{\lambda}$. First, testing whether $x\subseteq\omega$ is an element of $\omega$ can be done using a finite amount of memory, and thus with recursive snapshots, and in particular with snapshots in $L_{\omega+1}\subseteq L_{\alpha}$. 
Now, given that $x\in\omega$ and thus that $\lambda^{x}=\lambda$, we know from Lemma \ref{bounded bound violation time} and Lemma \ref{bounded looping time} that $P_{n}^{x}$ will produce a snapshot outside of $L_{\alpha}$, halt or loop before $\lambda$. In the latter two cases, all occuring snapshots will thus be contained in $L_{\lambda}$. 
Moreover, each initial segment (before the second loop, if the computation is looping) of the computation of $P_{n}^{x}$ will be an element of $L_{\lambda}$ and hence so will be the lists of natural numbers representing the snapshots that are used in the decision procedure. 
In the first case, we know from Lemma \ref{bounded bound violation time} that the first snapshot outside of $L_{\alpha}$ will occur in $<\lambda$ many steps and thus also be contained in $L_{\alpha}$.

Thus, the whole decision procedure indeed only uses snapshots in $L_{\lambda}$, hence $X\in$SPACE$^{\text{ITTM}}_{\lambda}$. As $X\notin$SPACE$^{\text{ITTM}}_{\alpha}$ by the argument above, we have SPACE$^{\text{ITTM}}_{\lambda}\setminus$SPACE$^{\text{ITTM}}_{\alpha}\neq\emptyset$, as desired.

\end{proof}

\subsection{Separating Space Complexity Classes for ITTMs}

The next natural questions are now whether there are sets that are ITTM-decidable, but not with snapshots in $L_{\lambda}$ (i.e. whether SPACE$^{\text{ITTM}}_{\lambda}$ equals the class of ITTM-decidable sets) and whether there are proper inclusions between the classes SPACE$^{\text{ITTM}}_{\alpha}$ for different infinite values of $\alpha<\lambda$.

The former question will be answered below as a consequence of the more general Corollary \ref{increasing at lambda}. 
For the time being, we offer a partial result. To this end, we introduce notions that we believe to be interesting in their own right.

\begin{defini}
Let $X,C$ be sets of real numbers. We say that $X$ is ITTM-semidecidable with snapshots in $C$ if and only if there is an ITTM-program $P$ that semidecides $X$ and produces only snapshots in $C$ on any input $x$. 
If $C=L_{\alpha}$ for some $\alpha\in\text{On}$, we write sSPACE$^{\text{ITTM}}_{\alpha}$ for the set of sets that are ITTM-semidecidable with snapshots in $C$. 
\end{defini}

\begin{prop}
There is an ITTM-semidecidable set $X$ that is not ITTM-semidecidable with snapshots in $L_{\lambda}$. Thus, sSPACE$^{\text{ITTM}}_{\lambda}\subsetneq$sSPACE$^{\text{ITTM}}_{\omega_{1}}$.
\end{prop}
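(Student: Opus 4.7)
I propose taking $X$ to be the self-halting set $X=\{n\in\omega:P_n^n\!\downarrow\}$, which is trivially ITTM-semi-decidable by simulation (hence in sSPACE$^{\text{ITTM}}_{\omega_{1}}$) and, by the standard diagonal argument, not ITTM-decidable. The proposition then reduces to the inclusion sSPACE$^{\text{ITTM}}_{\lambda}\subseteq$ ITTM-decidable, i.e.\ the $L_\lambda$-analogue of Corollary \ref{semidecidable with rec snapshots}; once this holds, $X\notin$ sSPACE$^{\text{ITTM}}_{\lambda}$ follows immediately.

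The key technical ingredient is an $L_\lambda$-version of Lemma \ref{rec snapshots hp}: an ITTM-program $H_\lambda$ that, given $(n,x)$ with all snapshots of $P_n^x$ in $L_\lambda[x]$, decides whether $P_n^x\!\downarrow$. The construction proceeds in two phases. In the first phase, I would build a complete enumeration of the reals writable in $x$ by running an interleaved simulation of $P_0^x,P_1^x,\ldots$ for $\lambda^x$ ITTM-steps and collecting the outputs of those that halt; since every halting $P_k^x$ halts in $<\lambda^x$ many steps, this yields an enumeration of $L_{\lambda^x}[x]\cap\mathfrak{P}(\omega)\supseteq L_\lambda[x]\cap\mathfrak{P}(\omega)$. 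In the second phase I would simulate $P_n^x$, locating each occurring snapshot $s$ in the enumeration and marking its index on a dedicated scratch tape, clearing marks whenever a later snapshot drops strictly below $s$ in some coordinate, exactly as in Lemma \ref{rec snapshots hp}. Halting of $P_n^x$ yields output $1$; a marked snapshot reappearing signals a strong loop and yields output $0$. Given $H_\lambda$, a semi-decider witnessing sSPACE$^{\text{ITTM}}_{\lambda}$-membership is promoted to a decider by adjoining the halting test of $H_\lambda$, just as in the proof of Corollary \ref{semidecidable with rec snapshots}.

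The main obstacle is the first phase: enumerating writable reals is in general not ITTM-decidable uniformly in advance, since one would need to decide halting for each $P_k^x$. The key point, however, is that this is a purely incremental process: by round-robin simulation, after $\lambda^x$ ITTM-steps all halting $P_k^x$ have been observed to halt and their outputs recorded, so the enumeration stabilizes. The second phase then terminates in $<\Sigma^x<\omega_{1}$ many steps (either $P_n^x$ halts or the Hamkins-Lewis-Welch strong-loop theorem guarantees a detectable strong loop), so the whole procedure is genuinely ITTM-computable. Combined with the undecidability of $X$, this yields $X\in$ sSPACE$^{\text{ITTM}}_{\omega_{1}}\setminus$ sSPACE$^{\text{ITTM}}_{\lambda}$, establishing the claimed strict inclusion.
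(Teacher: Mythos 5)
Your argument is essentially correct, but it takes a different route from the paper. The paper constructs a tailor-made diagonal set ($x\in X$ iff $P_{n(x)}^{x}$ strongly loops using only snapshots in $L_{\lambda}$), refutes a semidecider with snapshots in $L_{\lambda}$ by direct diagonalization, and then verifies semidecidability of that particular $X$ via the ``search for an ITTM-program that writes the snapshot'' bookkeeping. You instead prove the stronger structural statement that sSPACE$^{\text{ITTM}}_{\lambda}$ is contained in the class of ITTM-\emph{decidable} sets --- i.e.\ the $L_{\lambda}$-analogue of Corollary \ref{semidecidable with rec snapshots}, which the paper only establishes for $\alpha<\lambda$ (where a writable code for $L_{\alpha}$ exists) --- and then feed in any semidecidable undecidable set such as the self-halting set. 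This is a legitimate and arguably more informative decomposition: the separation then follows for free, and the lemma sSPACE$^{\text{ITTM}}_{\lambda}\subseteq\text{DEC}$ is of independent interest. The core technical trick is the same as the paper's (since $L_{\lambda}$ has no writable code, snapshots must be indexed on demand by the first program observed to write them). Two presentational points you should tighten: first, your ``two phases'' cannot literally be sequential, since $\lambda^{x}$ is not clockable in $x$ and there is no way to know when the enumeration of writable reals has stabilized; the enumeration must be interleaved with the simulation of $P_{n}^{x}$, pausing at each snapshot $s$ until some program is seen to halt with output $s$ (this search terminates exactly under the promise that $s\in L_{\lambda}[x]$, which is what makes $H_{\lambda}$ a promise-decider rather than a total one). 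Second, you must assign each snapshot a \emph{canonical} index (e.g.\ the first program observed to halt with that output) so that a recurring snapshot is recognized as such; the paper makes this explicit and your phrase ``locating $s$ in the enumeration'' glosses over it.
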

\begin{proof}
Let $X$ be the set of $x\subseteq\omega$ such that $P_{n(x)}^{x}$ runs into a strong loop and generates only snapshots in $L_{\lambda}$.

Suppose for a contradiction that $P_{n}$ is an ITTM-program that semidecides $X$ and uses only snapshots in $L_{\lambda}$ on any input. Let $x\subseteq\omega$ be such that $n(x)=n$. 
Then $P_{n(x)}^{x}$ does by definition not generate snapshots outside of $L_{\lambda}$. Moreover, we have 
$x\notin X\leftrightarrow P_{n(x)}^{x}\downarrow\leftrightarrow P_{n}^{x}\downarrow\leftrightarrow x\in X$, a contradiction. Hence $X$ is not ITTM-decidable with snapshots in $L_{\lambda}$.

On the other hand, $X$ is ITTM-semidecidable: Given $x\subseteq\omega$, we let $P_{n(x)}^{x}$ run. For every snapshot $s$ generated, we run all ITTM-programs simultaneously, waiting for one to halt with output $s$. If this happens, the index of that program is used to store $s$ as in the solution to the halting problem for ITTMs with recursive snapshots above. Since we use the index of the machine that halts first (the minimal one if there is more than one) with output $s$, the same snapshot will always receive the same index. Thus, if $P_{n(x)}^{x}$ generates only recursive snapshots, we will either eventually observe that it halts or find a witness for a strong looping, as in the solution to the halting problem above. In the former case, we enter an endless loop, in the latter case, we halt. On the other hand, if $P_{n(x)}^{x}$ does generate snapshots outside of $L_{\lambda}$, the search for an ITTM-program that generates $s$ will not terminate, and hence our procedure does not halt. Thus, our procedure halts on input $x$ if and only if $x\in X$, as desired.
\end{proof}


We now consider the problem of separating the classes SPACE$^{\text{ITTM}}_{\alpha}$ for different values of $\alpha$. It is natural to attempt adapting the separation of SPACE$^{\text{ITTM}}_{\lambda}$ from the set of ITTM-decidable sets to this purpose. We first consider a seemingly unrelated question; the desired separation will result as a by-product.


The proof of the unrelatedness of space and time complexity for ITTMs above relied on decidable singletons. Is there also a singleton set $\{x\}$ with the property that $\{x\}$ is ITTM-decidable, but not, say, with recursive snapshots? Indeed, there is. In fact, much more holds, as we will now show.

\begin{thm}{\label{complex singletons}}
	Let $\alpha<\lambda$. Then there is a real number $x$ such that $x$ is ITTM-recognizable, but not with snapshots in $L_{\alpha}$. In fact, $x$ can be taken to be ITTM-writable.
\end{thm}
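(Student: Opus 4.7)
My plan is to produce $x$ by a diagonal construction against the set
$R_\alpha := \{y \subseteq\omega : \{y\}\in\text{SPACE}^{\text{ITTM}}_\alpha\}$.
Since $R_\alpha$ is countable (at most one real per ITTM-program) while $L_\lambda\cap\mathfrak{P}(\omega)$ is countably infinite, writable reals outside $R_\alpha$ exist in abundance; the task is to exhibit one effectively.

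The first step is to generalize Lemmas \ref{rec snapshots hp} and \ref{rec snapshots decidable} from recursive snapshots to snapshots in $L_\alpha[\cdot]$. Since $\alpha<\lambda$ supplies an ITTM-writable code for $L_\alpha$, this goes through exactly as in the proof of Theorem \ref{ITTM SPACE ALPHA} and yields ITTM-decidable predicates $\mathrm{Snap}_\alpha(m,z)$ (``$P_m^z$ uses only snapshots in $L_\alpha[z]$'') and $H_\alpha(m,z)$ (the halting output of $P_m^z$ when $\mathrm{Snap}_\alpha$ holds), uniformly in $m$ and the oracle $z$. The second step is to define an ITTM-computable function $F\colon\omega\to L_\lambda\cap\mathfrak{P}(\omega)$ whose range covers $R_\alpha\cap L_\lambda$. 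Decoding $n$ as a triple $\langle m,q,r\rangle$, we interpret $r$ as coding a writable snapshot bound $\beta_r<\lambda$; simulate $P_q$ on empty input using the $L_{\beta_r}$-bounded halting machinery (the analogue of $H_\alpha$ at level $\beta_r$), and if $P_q$ is certified to halt with output $y\subseteq\omega$, test $\mathrm{Snap}_\alpha(m,y)$ and $H_\alpha(m,y)=1$; set $F(n):=y$ when both hold and $F(n):=\emptyset$ otherwise. Every $y\in R_\alpha\cap L_\lambda$ is the output of some halting $P_q$ whose snapshots all lie in $L_\beta$ for some $\beta<\lambda$ (each snapshot along a writable computation being itself writable, and the set of their $L$-ranks bounded below $\lambda$ by the halting time), so a suitable triple $\langle m,q,r\rangle$ with $F(\langle m,q,r\rangle)=y$ exists.

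Finally, define the diagonal $x(n):=1-F(n)(n)$. Dovetailed computation of the bits yields $x$ as an ITTM-writable real (hence $x\in L_\lambda$), and the diagonal property gives $x\neq F(n)$ for every $n$, so $x\notin\mathrm{range}(F)\supseteq R_\alpha\cap L_\lambda$; combined with $x\in L_\lambda$, this forces $x\notin R_\alpha$, i.e.\ $\{x\}\notin\text{SPACE}^{\text{ITTM}}_\alpha$. ITTM-recognizability of $\{x\}$ is automatic from writability (write $x$ and compare with the input). The main obstacle is uniform halting-detection for $P_q$ within writable time: since unrestricted ITTM-halting is undecidable, we sidestep this by threading the explicit snapshot-bound parameter $r$ through the enumeration, so that every invocation of halting-detection falls under the bounded-snapshot machinery of the first step. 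The delicate bookkeeping --- checking that $F$ is total and ITTM-computable, and that the dovetailed bit-computations together halt in time $<\lambda$ --- is the principal technical task, hinging on the observation that only the \emph{range} of $F$ needs to dominate $R_\alpha\cap L_\lambda$, so undercounting harms nothing.
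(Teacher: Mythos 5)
Your overall logic (diagonalize against $R_\alpha\cap L_\lambda$ and then use $x\in L_\lambda$ to conclude $x\notin R_\alpha$) is sound, and your first step --- relativizing the halting/snapshot machinery from recursive snapshots to snapshots in $L_\alpha[\cdot]$ via a writable code for $L_\alpha$ --- is exactly what the paper does in Theorem \ref{ITTM SPACE ALPHA}. The gap is in the construction of $F$. To ``interpret $r$ as coding a writable snapshot bound $\beta_r<\lambda$'' you must, for an arbitrary natural number $r$, either decide whether the $r$-th program halts on empty input with a well-ordering code as output (the weak halting problem, which is not ITTM-decidable), or otherwise produce from $r$ a code for an ordinal below $\lambda$ in such a way that the resulting bounds are cofinal in $\lambda$. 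Neither is possible: there is no total ITTM-computable map from $\omega$ into the writable ordinals with unbounded range, because $\lambda$ is admissible and the halting times of the computations of $F(n)$ form a total $\Sigma_1$-definable function on $\omega$ over $L_\lambda$, hence are bounded by some $\gamma<\lambda$. Consequently the range of \emph{any} total ITTM-computable $F$ is contained in some $L_{\gamma'}$ with $\gamma'<\lambda$, so your coverage claim $\mathrm{range}(F)\supseteq R_\alpha\cap L_\lambda$ presupposes that $R_\alpha\cap L_\lambda$ is bounded below $\lambda$ in $L$-rank. That is a substantive assertion you neither state nor prove, and if it were available the theorem would follow with no diagonalization at all (any writable real above the bound would do, being writable and hence recognizable). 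So the diagonal construction cannot be carried out as described; the missing boundedness fact is where the real work lies, and the ``delicate bookkeeping'' you defer is precisely the point at which the argument fails rather than a routine verification.

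The paper takes a route that avoids enumerating anything. It starts from a real $x\in(L_\zeta\setminus L_\lambda)\cap\mathfrak{P}(\omega)$ that is outright non-recognizable, uses Lemmas \ref{bounded bound violation time} and \ref{bounded looping time} to show that $L_{\lambda^x}$ already contains, for every program $P_n$, a witness to the failure of $P_n$ to recognize $x$ with snapshots in $L_\alpha$ (a boundary violation, a strong loop, a halt with output $\neq 1$, or two distinct accepted inputs), and then observes that the existence of such an $x$ together with such a witnessing $L$-level is a $\Sigma_1$ statement true in $L_\Sigma$. Reflecting it along $L_\lambda\prec_{\Sigma_1}L_\Sigma$ yields a real $r\in L_\lambda$, hence writable, hence recognizable, that is not recognizable with snapshots in $L_\alpha$. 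If you want to salvage a diagonal argument, you would first have to establish the boundedness of $R_\alpha\cap L_\lambda$, and the natural way to do that is the same $\Sigma_1$-reflection.
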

\begin{proof}
	Let $x\in (L_{\zeta}\setminus L_{\lambda})\cap\mathfrak{P}(\omega)$ be such that $x\in L_{\lambda^{x}}$. (One can e.g. take $x$ to be an eventually writable code for some ordinal in the interval $[\lambda,\zeta)$.) 
	Then $x$ is not ITTM-recognizable 
	and a fortiori not ITTM-recognizable with snapshots in $L_{\alpha}$. 
	
	As the eventually writable real numbers are closed under ITTM-writability, we have $\lambda^{x}\leq\zeta$. 
	Now, for each $n\in\omega$, by Lemma \ref{bounded looping time} and Lemma \ref{bounded bound violation time}, $L_{\lambda^{x}}$ contains one of the following:
	
	\begin{enumerate}
		\item A partial computation of $P_{n}^{x}$ that contains a snapshot outside of $L_{\alpha}[x]$.
		\item A strong loop of $P_{n}^{x}$.
		\item A terminating computation of $P_{n}^{x}$ with output $\neq 1$.
		\item A terminating computation of $P_{n}^{x}$ with output $1$.
	\end{enumerate}
	
	In cases (1)-(3), we thus know that $P_{n}$ does not recognize $x$ with snapshots in $L_{\alpha}$. In case $4$, we know that the $\Sigma_{1}$-formula $\exists{x}P_{n}^{x}\downarrow=1$ holds in $L_{\zeta}$, and thus, as $L_{\lambda}\prec_{\Sigma_{1}}L_{\zeta}$, it also holds in $L_{\lambda}$. Thus, $L_{\lambda}$ contains some $y$ such that $P_{n}^{y}\downarrow=1$. As $x\notin L_{\lambda}$ by assumption, we have $y\neq x$. Thus, $P_{n}$ outputs $1$ on two different inputs and therefore also does not recognize $x$. Moreover, as $\lambda\leq\lambda^{x}$, we have $x,y\in L_{\lambda^{x}}$, so in case (4), $L_{\lambda^{x}}$ contains two different real numbers for which $P_{n}$ outputs $1$.
	 In other words, $L_{\lambda^{x}}$ witnesses that $x$ is not ITTM-recognizable with snapshots in $L_{\alpha}$. 
	 
	 Thus, $L_{\Sigma}$ satisfies the $\Sigma_{1}$-formula that there are $x\subseteq\omega$ and an $L$-level $L_{\beta}$ such that, for every $n\in\omega$, $L_{\beta}$ contains one of (1)-(4), thus witnessing that $x$ is not ITTM-recognizable with snapshots in $L_{\alpha}$.
	
	As $L_{\lambda}\prec_{\Sigma_{1}}L_{\Sigma}$, the same holds in $L_{\lambda}$. Thus, there is a real number $r$ in $L_{\lambda}$ that is not ITTM-recognizable with snapshots in $L_{\alpha}$. Since $r\in L_{\lambda}$, $r$ is ITTM-writable and thus a fortiori ITTM-recognizable. Thus, $r$ is as desired.
\end{proof}

As a corollary, we obtain that many of the complexity classes SPACE$^{\text{ITTM}}_{\alpha}$ with $\alpha<\lambda$ are distinct:

\begin{corollary}{\label{proper inclusions below lambda}}
	For all $\alpha<\lambda$, there is $\beta\in(\alpha,\lambda)$ such that SPACE$^{\text{ITTM}}_{\alpha}\subsetneq$SPACE$^{\text{ITTM}}_{\beta}$.
\end{corollary}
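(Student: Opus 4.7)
The plan is to feed Theorem \ref{complex singletons} into a simple ``write-then-compare'' decision algorithm and use monotonicity of SPACE to force the resulting level to exceed $\alpha$. Fix $\alpha<\lambda$. Theorem \ref{complex singletons} provides an ITTM-writable real $r$ that is not ITTM-recognizable with snapshots in $L_\alpha$. Since any program deciding $\{r\}$ is a fortiori one that recognizes $r$, this already yields $\{r\}\notin\text{SPACE}^{\text{ITTM}}_\alpha$. Hence it suffices to produce some $\beta<\lambda$ with $\{r\}\in\text{SPACE}^{\text{ITTM}}_\beta$; the trivial monotonicity $\beta'\leq\alpha\Rightarrow\text{SPACE}^{\text{ITTM}}_{\beta'}\subseteq\text{SPACE}^{\text{ITTM}}_\alpha$ (obtained from $L_{\beta'}[x]\subseteq L_\alpha[x]$) will then automatically force $\beta>\alpha$, giving the desired strict inclusion.

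To exhibit such a $\beta$, fix a writing procedure $W$ for $r$ that halts in some $\gamma<\lambda$ many steps. Define the program $D$ on input $x$ to first run $W$ on a designated scratch tape (ignoring the oracle), and then to compare the resulting content bit-by-bit with $x$, outputting $1$ precisely when they agree and $0$ otherwise. Clearly $D$ decides $\{r\}$. Since $W$ ignores its oracle, $D^x$ halts in $\gamma+\omega$ many steps uniformly in $x$. Each snapshot of $D^x$ at time $t<\gamma+\omega$ is obtained by simulating $D^x$ for $t$ steps in oracle $x$, hence is ITTM-writable in oracle $x$ in time linear in $t$; this puts every such snapshot into some $L_\beta[x]$ with $\beta$ only modestly larger than $\gamma+\omega$. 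For definiteness take $\beta=\gamma+\omega\cdot 2$. Since $\lambda$ is a limit of admissible ordinals and $\gamma<\lambda$, one has $\beta<\lambda$, so $\{r\}\in\text{SPACE}^{\text{ITTM}}_\beta$, and the opening paragraph concludes that $\alpha<\beta<\lambda$.

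The main subtlety requiring care is that the space bound must be \emph{uniform} across all oracles: we need snapshots of $D^x$ in $L_\beta[x]$ for one and the same $\beta$, not merely in some $L_{\lambda^x}[x]$. This is why it is crucial that $W$ halts in an ordinal $\gamma$ depending only on $W$ (not on the input), and why $D$ inherits a uniform halting-time bound. Granted this, the standard fact that a snapshot of a halting ITTM-computation at time $t$ in oracle $x$ is itself writable in oracle $x$ in time $t+O(1)$, and hence lies in the corresponding $L$-level over $x$, completes the verification. No further input from the theory of $\lambda$, $\zeta$ or $\Sigma$ is needed beyond the fact that $\lambda$ is closed under $+\omega\cdot n$ for each $n<\omega$.
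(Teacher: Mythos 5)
Your proof is correct and takes essentially the same route as the paper: invoke Theorem \ref{complex singletons} to obtain a writable real $r$ with $\{r\}\notin$SPACE$^{\text{ITTM}}_{\alpha}$, then decide $\{r\}$ by a write-then-compare program whose snapshots all lie in an $L$-level just above the writing time of $r$, hence below $\lambda$. The only difference is presentational: you make explicit the uniformity of the snapshot bound over all oracles and the monotonicity step forcing $\beta>\alpha$, both of which the paper leaves implicit.
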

\begin{proof}
	Given $\alpha<\lambda$, the proof of Theorem \ref{complex singletons} shows how to obtain an ITTM-writable real number $x$ that is not ITTM-recognizable with snapshots in $L_{\alpha}$, i.e. such that $\{x\}\notin$SPACE$^{\text{ITTM}}_{\alpha}$. 
	
	Let $\beta$ be the writing time of $x$. Then $\beta<\lambda$. In order to determine whether some $y\subseteq\omega$ given in the oracle is equal to $x$, it suffices to write $x$ and then to compare it to $y$ bit by bit. The bit-by-bit-comparison can be done with finitely many memory bits besides $x$ and $y$. Writing $x$, on the other hand, only requires snapshots in $L_{\beta}$. Thus, $x$ is ITTM-recognizable with snapshots in $L_{\beta}$, i.e. 
	$\{x\}\in$SPACE$^{\text{ITTM}}_{\alpha}\subsetneq$SPACE$^{\text{ITTM}}_{\beta}$.
	
	Thus, we have SPACE$^{\text{ITTM}}_{\alpha}\subsetneq$SPACE$^{\text{ITTM}}_{\beta}$.
\end{proof}

Relativizing the above proof, we obtain a more general statement:

\begin{lemma}{\label{unbounded in relativized lambdas}}
	Let $c$ be ITRM-recognizable.
	Then there are cofinally in $\lambda^{c}$ many ordinals $\alpha$ such that SPACE$^{\text{ITTM}}_{\alpha}$ is inhabited.
\end{lemma}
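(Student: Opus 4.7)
The plan is to relativize the proofs of Theorem~\ref{complex singletons} and Corollary~\ref{proper inclusions below lambda} to the oracle $c$, and then transfer the relativized statement back to the unrelativized SPACE$^{\text{ITTM}}$-hierarchy via a padding trick built on the fact that, by Lemma~\ref{ITRM recog ITTM simple}, the singleton $\{c\}$ is ITTM-decidable with recursive snapshots.

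First, I would observe that every ingredient in the proof of Theorem~\ref{complex singletons} (Welch's analysis of $\lambda^{x}, \zeta^{x}, \Sigma^{x}$, the $\Sigma_{1}$-elementarity $L_{\lambda^{x}}[x]\prec_{\Sigma_{1}}L_{\Sigma^{x}}[x]$, and the bounded-bound-violation and bounded-looping lemmas) relativizes verbatim to an arbitrary real oracle. Since $c$ is ITRM-recognizable and hence ITTM-recognizable, the folklore fact recalled in the preliminaries gives $c\in L_{\lambda^{c}}$, so $L_{\lambda^{c}}[c]=L_{\lambda^{c}}$. Applying the $c$-relativized versions of Theorem~\ref{complex singletons} and Corollary~\ref{proper inclusions below lambda} then yields: for every $\alpha_{0}<\lambda^{c}$, there exists an ITTM-writable-in-$c$ real $r$ with $c$-writing time $\beta\in(\alpha_{0},\lambda^{c})$ such that $\{r\}$ is ITTM-decidable in the oracle $c$ with snapshots in $L_{\beta}[c,\text{input}]$ but not with snapshots in $L_{\alpha_{0}}[c,\text{input}]$.

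Next I would pad down to the unrelativized setting by considering the singleton $\{c\oplus r\}$, where $\oplus$ is a recursive pairing of reals. Given an input $w=w_{1}\oplus w_{2}$, an ITTM first checks whether $w_{1}=c$ using the ITRM-recognizer for $c$ --- which by Lemma~\ref{ITRM recog ITTM simple} produces only recursive snapshots --- and then simulates the $c$-relativized decider for $\{r\}$ on $w_{2}$ using $w_{1}$ as the oracle. The resulting snapshots lie in $L_{\beta}[w_{1},w_{2}]=L_{\beta}[w]$, which gives $\{c\oplus r\}\in$ SPACE$^{\text{ITTM}}_{\beta}$. Conversely, if $\{c\oplus r\}$ lay in SPACE$^{\text{ITTM}}_{\alpha_{0}}$ via some program $P'$, then given an input $y$ and oracle $c$ we could form $w=c\oplus y$ on a scratch tape and feed it to $P'$; the snapshots would then lie in $L_{\alpha_{0}}[w]=L_{\alpha_{0}}[c,y]$, witnessing that $\{r\}$ is ITTM-$c$-decidable with snapshots in $L_{\alpha_{0}}[c,\text{input}]$, contradicting the choice of $r$. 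Hence $\{c\oplus r\}\in$ SPACE$^{\text{ITTM}}_{\beta}\setminus$ SPACE$^{\text{ITTM}}_{\alpha_{0}}$.

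Finally, I would define $\gamma_{0}:=\min\{\delta:\{c\oplus r\}\in\text{SPACE}^{\text{ITTM}}_{\delta}\}$. This set is nonempty (it contains $\beta$) and upward-closed in the ordinals, so its minimum exists; by construction $\alpha_{0}<\gamma_{0}\leq\beta<\lambda^{c}$, and $\{c\oplus r\}$ witnesses that SPACE$^{\text{ITTM}}_{\gamma_{0}}$ is inhabited. As $\alpha_{0}<\lambda^{c}$ was arbitrary, the inhabited ordinals are cofinal in $\lambda^{c}$. The main obstacle is verifying that the proof of Theorem~\ref{complex singletons} relativizes cleanly to $c$; once that is granted, the padding step is a routine adaptation, and the ITRM-recognizability of $c$ is used precisely to let us test equality with $c$ inside the unrelativized decider without inflating the snapshot complexity.
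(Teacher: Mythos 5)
Your proposal is correct and follows essentially the same route as the paper: the paper likewise relativizes the argument of Theorem~\ref{complex singletons} to the oracle $c$ (carrying out the $\Sigma_{1}$-reflection from $L_{\Sigma^{c}}$ to $L_{\lambda^{c}}$ explicitly, starting from $c\oplus d$ with $d$ the $<_{L}$-least code for $\lambda^{c}$) and then folds the oracle into the input via the pair $c\oplus d^{\prime}$, using the recursive-snapshot ITRM-recognizer for $c$ exactly as in your padding step. The only differences are presentational: you invoke the relativized theorem as a black box and add the (harmless) extra step of minimizing over $\delta$ to exhibit inhabitedness, where the paper simply notes that some $\eta>\alpha$ must then be inhabited.
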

\begin{proof}
	Let $c$ be ITRM-recognizable. Then $c$ is in particular ITTM-recognizable and thus we have $c\in L_{\lambda^{c}}$. 
	
	We claim that no element of $L_{\Sigma^{c}}\setminus L_{\lambda^{c}}$ is ITTM-recognizable. Otherwise, if $s\in L_{\Sigma^{c}}\setminus L_{\lambda^{c}}$ was recognizable by the ITTM-program $Q$, we could let the universal ITTM-program $\mathcal{U}$ run in the oracle $c$, test each snapshot with $Q$ and halt once $s$ shows up. This program would halt after $\geq\lambda^{c}$ many steps in the oracle $c$, a contradiction.
	
	Let $d$ be the $<_{L}$-minimal code for $\lambda^{c}$. Then $d\in L_{\zeta^{c}}$, and thus $c\oplus d\in L_{\zeta^{c}}$. Moreover, $d$ is not writable in $c$, so we have $c\oplus d\in L_{\zeta^{c}}\setminus L_{\lambda^{c}}$. Consequently, $c\oplus d$ is not ITTM-recognizable. Moreover, we have $\lambda^{c\oplus d}\leq\zeta^{c}<\Sigma^{c}$.
	
	Now fix $\alpha\in(\omega,\lambda^{c})$.
	
	As in the proof of Theorem \ref{complex singletons}, $L_{\lambda^{c\oplus d}}$ will witness that $c\oplus d$ is not recognizable by an ITTM-program that uses only snapshots in $L_{\alpha}$. Thus, the statement that there are a real number $d$ and an ordinal $\delta$ such that $L_{\delta}$ witnesses that $c\oplus d$ is not ITTM-recognizable with snapshots in $L_{\alpha}$ holds in $L_{\Sigma^{c}}$. Clearly, this statement is $\Sigma_{1}$ in the parameters $\alpha$ and $c$, which are both contained in $L_{\lambda^{c}}$. 
	
	As $L_{\lambda^{c}}\prec_{\Sigma_{1}}L_{\Sigma^{c}}$, the same statement holds in $L_{\lambda^{c}}$. Thus, there are $d^{\prime}, L_{\delta^{\prime}}\in L_{\lambda^{c}}$ such that $L_{\delta^{\prime\prime}}$ witnesses that $c\oplus d^{\prime}$ is not ITTM-recognizable with snapshots in $L_{\alpha}$. Consequently, we have $\{c\oplus d^{\prime}\}\notin$SPACE$^{\text{ITTM}}_{\alpha}$.
	
	We now show that there is $\beta<\lambda^{c}$ such that $\{c\oplus d^{\prime}\}\in$SPACE$^{\text{ITTM}}_{\beta}$. It then follows that, for some $\eta>\alpha$, SPACE$^{\text{ITTM}}_{\eta}$ must be inhabited, as desired. As $d^{\prime}\in L_{\lambda^{c}}$, $d^{\prime}$ is writable in the oracle $c$; let $Q$ be an ITTM-program such that $Q^{c}$ write $d^{\prime}$.
	
	By assumption, $c$ is ITRM-recognizable and thus ITTM-recognizable with recursive snapshots. Now, given $z=z_{0}\oplus z_{1}\subseteq\omega$ in the oracle, we first use this fact to test, using only recursive snapshots, whether $z_{0}=c$. If not, we halt with output $0$. Otherwise, we know that $z_{0}=c$ and we run $Q^{z_{0}}$. By the choice of $Q$, the output must be $d^{\prime}$, which can now be compared to $z_{1}$ bit by bit using only a finite amount of memory. Now, if $\tau$ is the halting time of $Q^{c}$, then $\tau<\lambda^{c}$ and the procedure just described only uses snapshots in $L_{\tau+\omega}$. Thus, we have $\{c\oplus d^{\prime}\}\in$SPACE$^{\text{ITTM}}_{\tau}$.
	
	Taken together, we have $\{c\oplus d^{\prime}\}\in$SPACE$^{\text{ITTM}}_{\tau}\setminus$SPACE$^{\text{ITTM}}_{\alpha}$, as desired.
\end{proof}

\begin{thm}{\label{many points of increase}}
There are cofinally in $\sigma$ many $\alpha$ such that SPACE$^{\text{ITTM}}_{\alpha}$ is inhabited. Thus, for every $\alpha<\sigma$, there is $\beta\in(\alpha,\sigma)$ such that SPACE$^{\text{ITTM}}_{\alpha}\subsetneq$SPACE$^{\text{ITTM}}_{\beta}$.
\end{thm}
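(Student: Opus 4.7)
The plan is to bootstrap from Lemma~\ref{unbounded in relativized lambdas} using the fact that ITRM-recognizable reals sit cofinally high in $L_{\sigma}$. First, given an arbitrary $\alpha_{0}<\sigma$, I would invoke Theorem~\ref{recogdistribution} to pick an ITRM-recognizable real $c\in L_{\sigma}\setminus L_{\alpha_{0}}$. Since ITRM-recognizability implies ITTM-recognizability (ITTMs simulate ITRMs), the folklore fact recalled in the preliminaries gives $c\in L_{\lambda^{c}}$, and hence $\lambda^{c}>\alpha_{0}$ (otherwise we would have $c\in L_{\lambda^{c}}\subseteq L_{\alpha_{0}}$).

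Next, I would feed this $c$ into Lemma~\ref{unbounded in relativized lambdas} to extract some $\alpha\in(\alpha_{0},\lambda^{c})$ for which SPACE$^{\text{ITTM}}_{\alpha}$ is inhabited. Since $\alpha_{0}<\sigma$ was arbitrary, this already shows that inhabited SPACE classes appear cofinally in $\sigma$, provided the $\alpha$'s that come out stay below $\sigma$; the latter is automatic from Theorem~\ref{sigma bound on inhabited classes}. As a byproduct of combining those two results, one obtains $\lambda^{c}<\sigma$ for every ITRM-recognizable $c$, since otherwise Lemma~\ref{unbounded in relativized lambdas} would produce inhabited SPACE classes with indices $\geq\sigma$, contradicting Theorem~\ref{sigma bound on inhabited classes}.

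Finally, the second assertion, namely the strict inclusion SPACE$^{\text{ITTM}}_{\alpha}\subsetneq$SPACE$^{\text{ITTM}}_{\beta}$ for a suitable $\beta\in(\alpha,\sigma)$, falls out of the definition of ``inhabited'': given $\alpha<\sigma$, pick an inhabited $\beta\in(\alpha,\sigma)$ from the first part; then there is some $X\in$ SPACE$^{\text{ITTM}}_{\beta}\setminus\bigcup_{\iota<\beta}$ SPACE$^{\text{ITTM}}_{\iota}$, so in particular $X\notin$ SPACE$^{\text{ITTM}}_{\alpha}$, and the strict inclusion follows from the trivial monotonicity SPACE$^{\text{ITTM}}_{\alpha}\subseteq$ SPACE$^{\text{ITTM}}_{\beta}$. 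I do not anticipate a genuine obstacle here; the main leverage is the combination of Theorem~\ref{recogdistribution} (which supplies ITRM-recognizable reals arbitrarily high in $L_{\sigma}$) with the folklore fact $c\in L_{\lambda^{c}}$ (which forces their $\lambda^{c}$ to grow accordingly), so that Lemma~\ref{unbounded in relativized lambdas} can be invoked at arbitrarily high thresholds below $\sigma$.
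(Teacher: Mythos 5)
Your proposal is correct and follows essentially the same route as the paper: use Theorem~\ref{recogdistribution} to find an ITRM-recognizable real $c$ with $\lambda^{c}$ above the given threshold (the paper does this via an ITRM-recognizable code for an ordinal $\tau>\alpha$, you via $c\notin L_{\alpha_{0}}$ together with the folklore fact $c\in L_{\lambda^{c}}$ --- an immaterial difference), and then apply Lemma~\ref{unbounded in relativized lambdas}. Your additional observations (that the resulting indices lie below $\sigma$ by Theorem~\ref{sigma bound on inhabited classes}, and that inhabitedness plus monotonicity yields the strict inclusion) correctly fill in steps the paper leaves implicit.
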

\begin{proof}
Recall from above that there are cofinally in $\sigma$ many ordinals $\alpha$ that have ITRM-recognizable codes. Now pick such an ordinal $\tau$ above $\alpha$ along with an ITRM-recognizable code $c$ for $\tau$ and use Lemma \ref{unbounded in relativized lambdas}.
\end{proof}

The proof of Lemma \ref{unbounded in relativized lambdas} further yields many specific separation results concerning space complexity classes for ITTMs:

\begin{corollary}{\label{increasing at lambda}}
	Let $c$ be ITRM-recognizable and $\alpha<\lambda^{c}$. Then 
	\begin{center}
		SPACE$^{\text{ITTM}}_{\alpha}\subsetneq$SPACE$^{\text{ITTM}}_{\lambda^{c}}\subsetneq$SPACE$^{\text{ITTM}}_{\sigma}$
	\end{center}
\end{corollary}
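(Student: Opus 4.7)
My plan is to reduce the statement to the two earlier cofinality results, namely Lemma \ref{unbounded in relativized lambdas} and Theorem \ref{many points of increase}, once the auxiliary bound $\lambda^{c}<\sigma$ is in place. Both weak inclusions SPACE$^{\text{ITTM}}_{\alpha}\subseteq$ SPACE$^{\text{ITTM}}_{\lambda^{c}}\subseteq$ SPACE$^{\text{ITTM}}_{\sigma}$ then follow at once from the trivial monotonicity SPACE$^{\text{ITTM}}_{\beta}\subseteq$ SPACE$^{\text{ITTM}}_{\gamma}$ for $\beta\leq\gamma$, so the real task is to produce witnesses for the two strict inclusions.

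For SPACE$^{\text{ITTM}}_{\alpha}\subsetneq$ SPACE$^{\text{ITTM}}_{\lambda^{c}}$, I would apply Lemma \ref{unbounded in relativized lambdas} to $c$ and extract an inhabited class SPACE$^{\text{ITTM}}_{\eta}$ with $\eta\in(\alpha,\lambda^{c})$; any witness $X\in$ SPACE$^{\text{ITTM}}_{\eta}\setminus\bigcup_{\iota<\eta}$ SPACE$^{\text{ITTM}}_{\iota}$ then lies in SPACE$^{\text{ITTM}}_{\lambda^{c}}$ by monotonicity but not in SPACE$^{\text{ITTM}}_{\alpha}$ because $\alpha<\eta$. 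For SPACE$^{\text{ITTM}}_{\lambda^{c}}\subsetneq$ SPACE$^{\text{ITTM}}_{\sigma}$, I would instead invoke Theorem \ref{many points of increase}: inhabited SPACE-classes are cofinal in $\sigma$, so I pick an inhabited $\eta'\in(\lambda^{c},\sigma)$ together with a witness $Y\in$ SPACE$^{\text{ITTM}}_{\eta'}\setminus\bigcup_{\iota<\eta'}$ SPACE$^{\text{ITTM}}_{\iota}$; the same monotonicity argument then gives $Y\in$ SPACE$^{\text{ITTM}}_{\sigma}\setminus$ SPACE$^{\text{ITTM}}_{\lambda^{c}}$.

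The only nontrivial prerequisite, and the step I expect to be the main obstacle, is the bound $\lambda^{c}<\sigma$. I would establish it by first noting $c\in L_{\sigma}$ via Theorem \ref{recogdistribution}, and then arguing that for such $c$ the entire Welch triple $(\lambda^{c},\zeta^{c},\Sigma^{c})$ lies below $\sigma$: accidental writability in oracle $c$ is expressible by $\Sigma_{1}$-formulae in $c$, and the stability $L_{\sigma}\prec_{\Sigma_{1}}L$ forces the corresponding triple to be already computed inside $L_{\sigma}$. Once this bound is secured, each strict inclusion is a direct one-line application of the appropriate cofinality statement in the interval $(\alpha,\lambda^{c})$ respectively $(\lambda^{c},\sigma)$.
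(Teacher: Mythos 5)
Your proposal is correct and follows essentially the same route as the paper: both strict inclusions are obtained by chaining monotonicity with a witness from an inhabited class SPACE$^{\text{ITTM}}_{\eta}$ for some $\eta\in(\alpha,\lambda^{c})$ supplied by Lemma \ref{unbounded in relativized lambdas}, respectively some $\eta'\in(\lambda^{c},\sigma)$ supplied by Theorem \ref{many points of increase}. Your explicit verification that $\lambda^{c}<\sigma$ (via $c\in L_{\sigma}$ and $\Sigma_{1}$-reflection of the Welch triple) is a point the paper leaves implicit, and your argument for it is sound.
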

\begin{proof}
	Let $\alpha<\lambda^{c}$. Then pick $\eta\in(\alpha,\lambda^{c})$ as in the proof of Lemma \ref{unbounded in relativized lambdas}. Moreover, use Theorem \ref{many points of increase} to pick $\beta\in(\lambda^{c},\sigma)$ such that SPACE$^{\text{ITTM}}_{\lambda^{c}}\subsetneq$SPACE$^{\text{ITTM}}_{\beta}$. 
	Then we have 
	\begin{center}
SPACE$^{\text{ITTM}}_{\alpha}\subsetneq$SPACE$^{\text{ITTM}}_{\eta}\subseteq$SPACE$^{\text{ITTM}}_{\lambda^{c}}\subsetneq$SPACE$^{\text{ITTM}}_{\beta}\subseteq$SPACE$^{\text{ITTM}}_{\sigma}$,
	\end{center}
as desired.
\end{proof}

\begin{corollary}
		For each $\alpha<\sigma$, we have SPACE$^{\text{ITTM}}_{\alpha}\subsetneq$SPACE$^{\text{ITTM}}_{\sigma}$.
\end{corollary}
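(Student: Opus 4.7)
The plan is to obtain this as an immediate consequence of Corollary \ref{increasing at lambda}. That corollary already gives the strict increase SPACE$^{\text{ITTM}}_{\alpha}\subsetneq$SPACE$^{\text{ITTM}}_{\lambda^{c}}\subsetneq$SPACE$^{\text{ITTM}}_{\sigma}$ for every ITRM-recognizable real $c$ and every $\alpha<\lambda^{c}$. So the only thing to supply, given $\alpha<\sigma$, is an ITRM-recognizable real $c$ with $\lambda^{c}>\alpha$; then transitivity of $\subsetneq$ finishes the job.

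Fix $\alpha<\sigma$. By Theorem \ref{recogdistribution} there is an ITRM-recognizable real $c\in L_{\sigma}\setminus L_{\alpha}$. Any ITRM-recognizable real is ITTM-recognizable (for example via Lemma \ref{ITRM recog ITTM simple}, which gives that $\{c\}$ is even ITTM-decidable, and in particular ITTM-recognizable). Hence the Folklore fact from the Preliminaries applies and yields $c\in L_{\lambda^{c}}$. Since $c\notin L_{\alpha}$, we conclude $\lambda^{c}>\alpha$, which is exactly the hypothesis needed to invoke Corollary \ref{increasing at lambda}.

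Applying Corollary \ref{increasing at lambda} to this $c$ and this $\alpha$ then gives SPACE$^{\text{ITTM}}_{\alpha}\subsetneq$SPACE$^{\text{ITTM}}_{\lambda^{c}}\subsetneq$SPACE$^{\text{ITTM}}_{\sigma}$, and in particular SPACE$^{\text{ITTM}}_{\alpha}\subsetneq$SPACE$^{\text{ITTM}}_{\sigma}$, as desired. There is no real obstacle at this stage: the substantive work — the reflection-in-$L_{\Sigma^{c}}$-then-down-to-$L_{\lambda^{c}}$ argument that produces the separating writable-relative-to-$c$ singleton — is already encapsulated in Lemma \ref{unbounded in relativized lambdas} and Corollary \ref{increasing at lambda}, and the only point one has to verify by hand is the existence of a recognizable witness with $\lambda^{c}$ above the prescribed $\alpha$, which Theorem \ref{recogdistribution} supplies in an essentially optimal way.
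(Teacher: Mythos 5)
Your argument is correct and takes essentially the same route as the paper: the paper simply invokes Theorem \ref{many points of increase} to obtain $\beta\in(\alpha,\sigma)$ with SPACE$^{\text{ITTM}}_{\alpha}\subsetneq$SPACE$^{\text{ITTM}}_{\beta}\subseteq$SPACE$^{\text{ITTM}}_{\sigma}$. Your variant, which routes through Corollary \ref{increasing at lambda} and supplies the needed witness $c$ with $\lambda^{c}>\alpha$ via Theorem \ref{recogdistribution} and the folklore fact $c\in L_{\lambda^{c}}$, rests on exactly the same underlying machinery (Lemma \ref{unbounded in relativized lambdas}) and is a sound, slightly more explicit packaging of the same deduction.
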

\begin{proof}
	Pick $\beta$ as in Lemma \ref{many points of increase}. Then SPACE$^{\text{ITTM}}_{\alpha}\subsetneq$SPACE$^{\text{ITTM}}_{\beta}\subseteq$SPACE$^{\text{ITTM}}_{\sigma}$.
\end{proof}

	

\section{Nondeterministic ITTM-complexity}

We conclude with some observations on the nondeterministic analogues of TIME$^{\text{ITTM}}_{\alpha}$ and SPACE$^{\text{ITTM}}_{\alpha}$. This study was started in L\"owe \cite{Loe} and continued by Winter \cite{Wi1} (see the discussion in the last section).

We begin by observing that WO has a high time complexity even if nondeterminism is allowed.

\begin{prop}
	WO$\notin$NTIME$^{\text{ITTM}}_{\alpha}$ for all countable $\alpha$.
\end{prop}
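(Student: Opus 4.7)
My plan is to derive a contradiction from the classical fact (going back to Luzin and Sierpi\'nski) that WO is not Borel, and hence WO is not $\Sigma^{1}_{1}(p)$ for any real parameter $p$: for if WO were $\Sigma^{1}_{1}(p)$, it would lie in $\Sigma^{1}_{1}(p)\cap\Pi^{1}_{1}(p)=\Delta^{1}_{1}(p)$, and would therefore be Borel. The strategy is thus to show that, if WO were in NTIME$^{\text{ITTM}}_{\alpha}$ for some countable $\alpha$, then WO would admit exactly such a $\Sigma^{1}_{1}(p)$-definition.

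So, suppose for a contradiction that $P$ is a nondeterministic ITTM-program witnessing WO$\in$NTIME$^{\text{ITTM}}_{\alpha}$: for every $x\subseteq\omega$, $x\in$WO iff some nondeterministic choice sequence $r\subseteq\omega$ makes the deterministic run $P^{x\oplus r}$ halt in $<\alpha$ many steps with output $1$. Fix a real $p\subseteq\omega$ coding a well-ordering of $\omega$ of type $\alpha$. The key observation is that, relative to $p$, a whole snapshot trace of $P$ of length $\leq\alpha$ can be coded by a single real $c$: each snapshot consists of a tape content (a real), a state, and a head position, and $\alpha$-many snapshots can be packed into one real via the ordering coded by $p$. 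The predicate ``$c$ codes a valid accepting run of $P^{x\oplus r}$ of length $\leq\alpha$'' is then arithmetical in $x,r,p,c$, since one has to check only the successor transition rule at non-limit stages, the $\limsup$-rule at limit stages (for which $p$ lets one recognize limit points and cofinal sequences below them), and the acceptance condition at the halting stage. Consequently $x\in$WO becomes equivalent to $\exists r\exists c\,\varphi(x,r,c,p)$ for arithmetical $\varphi$, exhibiting WO as $\Sigma^{1}_{1}(p)$, in contradiction with the classical result.

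The only obstacle I foresee is technical rather than conceptual: setting up the coding of the transfinite snapshot trace via $p$ cleanly and expressing the limit rule by an arithmetical formula. No deeper obstruction arises, since the whole point of the argument is the familiar observation that nondeterministic ITTM-decision in countable time cannot escape $\Sigma^{1}_{1}$, while WO sits strictly outside of it.
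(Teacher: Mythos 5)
Your argument is correct and is essentially the paper's own proof: both fix a real parameter coding $\alpha$, observe that membership in WO then becomes the $\Sigma^{1}_{1}$ (in that parameter) statement that some nondeterministic run of length $<\alpha$ accepts, and conclude by the classical fact that WO is not $\Sigma^{1}_{1}$ in any real parameter. The only cosmetic difference is that you derive that fact via ``WO is not Borel'' and Suslin's theorem, and that you spell out the coding of the transfinite snapshot trace, which the paper leaves implicit.
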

\begin{proof}
	Suppose otherwise, and let $\alpha$ be countable such that WO belongs to NTIME$^{\text{ITTM}}_{\alpha}$. Moreover, let $c$ be a real number coding $\alpha$ and let $P$ be a nondeterministic ITTM-program that decides WO. Now, for any $x\subseteq\omega$, the statement `There is a computation of $P^{x}$ of length $\alpha$ that halts with output $1$' is $\Sigma_{1}^{1}$ in the parameter $c$ and characterizes WO. However, it is well-known that WO is not $\Sigma_{1}^{1}$ in any real parameter, see, e.g., \cite{MW}.
\end{proof}

\begin{corollary}
	NTIME$^{\text{ITTM}}_{\alpha}\nsupseteq$SPACE$^{\text{ITTM}}_{\omega+1}$ for $\alpha$ countable. 
	
	In particular, NTIME$^{\text{ITTM}}_{\alpha}\neq$NSPACE$^{\text{ITTM}}_{\alpha}$ for $\alpha>\omega$ countable.
\end{corollary}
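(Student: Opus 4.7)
The proof should use WO as the separating witness, exactly as in the two preceding results. First, Lemma~\ref{WO counterexample} gives that WO is ITTM-decidable with recursive snapshots; since every recursive real lies in $L_{\omega+1}$, this immediately places WO in SPACE$^{\text{ITTM}}_{\omega+1}$. The immediately preceding proposition, on the other hand, says that WO $\notin$ NTIME$^{\text{ITTM}}_{\alpha}$ for every countable $\alpha$. Combining these two facts yields the first assertion: WO witnesses SPACE$^{\text{ITTM}}_{\omega+1}\nsubseteq$ NTIME$^{\text{ITTM}}_{\alpha}$ for every countable $\alpha$.

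For the ``in particular'' clause, one uses the monotonicity SPACE$^{\text{ITTM}}_{\omega+1}\subseteq$ SPACE$^{\text{ITTM}}_{\alpha}$ whenever $\alpha\geq\omega+1$, which is immediate from the definition (a snapshot lying in $L_{\omega+1}[x]$ also lies in $L_{\alpha}[x]$), together with the trivial inclusion SPACE$^{\text{ITTM}}_{\alpha}\subseteq$ NSPACE$^{\text{ITTM}}_{\alpha}$ obtained by viewing any deterministic ITTM-program as a (degenerately) nondeterministic one. These two inclusions place WO into NSPACE$^{\text{ITTM}}_{\alpha}$ for every countable $\alpha>\omega$, while WO $\notin$ NTIME$^{\text{ITTM}}_{\alpha}$ by the previous proposition. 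Hence NTIME$^{\text{ITTM}}_{\alpha}\neq$ NSPACE$^{\text{ITTM}}_{\alpha}$ for every such $\alpha$.

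There is essentially no difficulty here beyond reading off both halves from the preceding proposition and Lemma~\ref{WO counterexample}. The only spot where a bit of care is warranted is in confirming that the conventions adopted for NSPACE$^{\text{ITTM}}_{\alpha}$ (nondeterministic program, all branches use snapshots in $L_{\alpha}[x]$) really do subsume the deterministic classes, so that SPACE$^{\text{ITTM}}_{\alpha}\subseteq$ NSPACE$^{\text{ITTM}}_{\alpha}$; but this is entirely standard, as a deterministic computation is just a nondeterministic one with a unique branch. I therefore expect no genuine obstacle.
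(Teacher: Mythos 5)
Your proposal matches the paper's argument exactly: WO lies in SPACE$^{\text{ITTM}}_{\omega+1}$ by Lemma~\ref{WO counterexample} but not in NTIME$^{\text{ITTM}}_{\alpha}$ by the preceding proposition, and the second claim follows via the inclusions SPACE$^{\text{ITTM}}_{\omega+1}\subseteq$SPACE$^{\text{ITTM}}_{\alpha}\subseteq$NSPACE$^{\text{ITTM}}_{\alpha}$. The paper leaves the ``in particular'' step to the reader as an easy consequence, and your spelled-out version is precisely the intended one.
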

\begin{proof}
	The first claim follows from the last Proposition and the fact that WO belongs to SPACE$^{\text{ITTM}}_{\omega+1}$. The second is an easy consequence of the first.
\end{proof}

If we knew that NTIME$^{\text{ITTM}}_{\alpha}\subseteq$NSPACE$^{\text{ITTM}}_{\alpha}$ holds in general, we could conclude that a proper inclusion relation holds for all $\alpha$. However, this is only known for recursive $\alpha$ and open for all other values of $\alpha$ (see \cite{Wi1}).

In fact, it is quite conceivable that this inclusion fails for some values of $\alpha$: The ability of a nondeterministic ITTM to `guess' an arbitrary real number allows to nondeterministically decide various sets with a uniform time bound, while it is not clear at all how to do this with any space bound. The following proposition provides a wealth of potential counterexamples.

\begin{prop}
	For $x\subseteq\omega$, $\{x\}$ belongs to NTIME$^{\text{ITTM}}_{\alpha}$ for some $\alpha$ if and only if $x\in L_{\sigma}$.
	
	Moreover, $L_{\sigma}\cap\mathfrak{P}(\omega)$ belongs to NTIME$_{\sigma}$.
\end{prop}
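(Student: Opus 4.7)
For the forward direction $\{x\} \in \text{NTIME}^{\text{ITTM}}_\alpha \Rightarrow x \in L_\sigma$, I would regard a nondeterministic ITTM $P$ as a deterministic two-input machine, so that ``$P$ nondeterministically accepts $y$ in $<\alpha$ steps'' reads $\exists g\,[P^{y,g} \text{ halts with output } 1 \text{ in } <\alpha \text{ steps}]$. The strategy is $\Sigma_1$-reflection through $L_\sigma \prec_{\Sigma_1} L$, split on $\alpha$. If $\alpha \geq \sigma$, apply reflection to the $\Sigma_1$-statement (with only the natural-number parameter $P$) ``$\exists y\,\exists g\,\exists z\,[\,z$ is a halting computation of $P^{y,g}$ with output $1\,]$''. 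This holds in $V$ (witnessed by any accepting run on input $x$), hence in $L$ by Shoenfield absoluteness, hence in $L_\sigma$ by $\Sigma_1$-elementarity; any witness $z_0 \in L_\sigma$ has length $\gamma_0 < \sigma \leq \alpha$, so correctness of $P$ as an NTIME$^{\text{ITTM}}_\alpha$-decider for $\{x\}$ forces the extracted $y_0$ to equal $x$, giving $x \in L_\sigma$. If $\alpha < \sigma$, incorporate the $<_L$-least code $c \in L_{\alpha+1} \subseteq L_\sigma$ for $\alpha$ as a $\Sigma_1$-parameter and enrich the statement with the clause ``$z$ has length $<\alpha$''; the same reflection goes through, and the length bound directly pins $y_0 = x$.

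For the reverse direction $x \in L_\sigma \Rightarrow \{x\} \in \text{NTIME}^{\text{ITTM}}_\alpha$ for some $\alpha$, I would invoke the standard fact that $L_\sigma$ coincides with the transitive collapse of the $\Sigma_1$-Skolem hull of $\emptyset$ in $L$, so that $x$ is the unique real satisfying some $\Sigma_1$-formula $\phi$ (with no real parameters) in $L$. Design a nondeterministic ITTM that on input $y$ guesses a real $c$ coding some $L_\beta$ with $\beta < \sigma$ together with $\Sigma_1$-existential witnesses for $\phi(y)$, deterministically checks well-foundedness and the $L$-hierarchy axioms for $c$ and verifies $L_\beta \models \phi(y)$, and accepts iff all checks succeed. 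For $y = x$, a correct guess exists with $\beta$ large enough to contain the witnesses (which live in $L_\sigma$ by $\Sigma_1$-elementarity), and the run halts in some fixed ordinal $\alpha$ depending on $\beta$ and the complexity of $\phi$; for $y \neq x$, any successful run would give $L \models \phi(y)$ by upward absoluteness of $\Sigma_1$-truth, contradicting the uniqueness of $x$.

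For the moreover clause $L_\sigma \cap \mathfrak{P}(\omega) \in \text{NTIME}_\sigma$, I would run a uniform version of the $\Leftarrow$-recipe: on input $y$, guess $(c, n)$ with $c$ purporting to code some $L_\beta$ and $n$ naming $y$ inside $c$, verify well-foundedness and $L$-hierarchy axioms for $c$, and check that the $n$-th real of $L_\beta$ equals $y$. For $y \in L_\sigma$, taking $\beta < \sigma$ with $y \in L_\beta$ produces an accepting run of length some ordinal $<\sigma$. For $y \notin L_\sigma$, every code placing $y$ in its model must encode an $L$-level of height $\geq\sigma$, and the well-foundedness verification up to that height already consumes $\geq\sigma$ ITTM-steps, so no accepting run of length $<\sigma$ exists.

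The main obstacle lies in the $\Leftarrow$-direction, which rests on the nontrivial (but standard) identification of $L_\sigma \cap \mathfrak{P}(\omega)$ with the reals $\Sigma_1$-definable in $L$ from no real parameters: this is what lets a single fixed nondeterministic ITTM isolate $x$ from a mere natural-number description. The other reflection steps are routine, and the only delicate calibration is making the well-foundedness verification in the moreover clause cleanly transition from ``${<}\sigma$ steps'' to ``${\geq}\sigma$ steps'' at the threshold $\beta = \sigma$.
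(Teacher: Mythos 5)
Your forward direction is sound and in fact supplies something the paper's own proof leaves implicit: the printed argument only treats the implication from $x\in L_{\sigma}$ and the ``moreover'' clause, and the converse is exactly the Shoenfield-plus-$\Sigma_{1}$-elementarity template of Theorem \ref{sigma bound on inhabited classes}, which you reconstruct correctly. One small slip: the $<_{L}$-least code for a countable $\alpha$ need not lie in $L_{\alpha+1}$ (it first appears at the least level where $\alpha$ becomes countable), but all you need is that it lies in $L_{\sigma}$, which holds since every ordinal of $L_{\sigma}$ is countable in $L_{\sigma}$. Your reverse direction is the same guess-and-verify idea as the paper's, but routed differently: the paper takes $\beta<\sigma$ minimal with $L_{\beta}\models\phi$ for a $\Sigma_{1}$-sentence $\phi$, lets $c$ be the $<_{L}$-least code of $L_{\beta}$, and cites \cite{CSW} for the fact that $\{c\}$ is deterministically ITTM-decidable in $<\sigma$ steps, so the verification of the guess has a uniform bound; your machine instead checks well-foundedness and the $L$-hierarchy axioms of the raw guess, which on a bad guess can run arbitrarily long. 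Under the ``there exists an accepting path of length $<\alpha$'' reading of NTIME this is still adequate for the singleton claim, since rejection only requires that no short accepting path exists.

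The genuine gap is in the ``moreover'' clause. Your machine accepts $y$ as soon as some guess codes \emph{some} $L_{\beta}$ containing $y$, with no cap on $\beta$. For $y\in L\setminus L_{\sigma}$ such guesses exist and do lead to accepting runs; to place $y$ outside the decided set you must show that every such run has length $\geq\sigma$, and your justification (``the well-foundedness verification up to that height already consumes $\geq\sigma$ steps'') is an assertion about one particular algorithm, not a proof---nothing a priori rules out a verification procedure that certifies a well-order of type $\geq\sigma$ in fewer than $\sigma$ steps. The paper avoids the issue entirely by building the bound into the certificate: the guess must code a \emph{minimal} $L$-level satisfying some $\Sigma_{1}$-sentence. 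All such levels have height $<\sigma$ (by $L_{\sigma}\prec_{\Sigma_{1}}L$) while their union exhausts $L_{\sigma}$, so for $y\notin L_{\sigma}$ there is no accepting run of any length. You should either adopt that minimality requirement or prove a genuine lower bound on the running time of your well-foundedness check.
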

\begin{proof}
Let $x\in L_{\sigma}$. Pick $\beta<\sigma$ such that $x\in L_{\beta}$ and such that, for some $\Sigma_{1}$-statement $\phi$, $\beta$ is minimal with $L_{\beta}\models\phi$. 
Let $c$ be the $<_{L}$-minimal real number that codes $L_{\beta}$. Then $\{c\}$ is ITTM-decidable in $<\sigma$ many steps, see \cite{CSW}. 
Thus, $\{x\}$ is nondeterministically decidable in $<\sigma$ steps as follows: Given the input $y\subseteq\omega$, first use $\omega$ many steps to guess a real number $d$. Then verify whether $d=c$. If not, reject. 
Otherwise, $x$ is coded in $c$ by some fixed natural number $j$ and it takes $<\sigma$ many steps to check whether $y=x$ relative to $c$.

To decide whether $x\in L_{\sigma}$, guess again a real number in $\omega$ many steps, then verify whether it codes some minimal $L$-level $L_{\beta}$ satisfying some $\Sigma_{1}$-statement $\phi$ and containing $x$. If not, reject, otherwise accept.
\end{proof}

\subsection{An alternative approach to nondeterminism}

Above, we considered nondeterministic ITTM-complexity classes defined via nondeterministic ITTMs. Another natural definition, used by Schindler in \cite{Schi} and also used in L\"owe \cite{Loe} and \cite{Wi1} would be that a set $X$ belongs to NTIME$_{f}^{\text{ITTM},*}$ if and only if there is a set $Y\subseteq\mathfrak{P}(\omega)\times\mathfrak{P}(\omega)$ such that $Y$ belongs to TIME$_{f}^{\text{ITTM}}$ and $X=\{x\subseteq\omega:\exists{y\subseteq\omega}(x,y)\in Y\}$ (and similarly for NSPACE$_{f}^{\text{ITTM},*}$). Let us denote by NDEC$^{\text{ITTM},*}$ the set of sets that are nondeterministically ITTM-decidable in this sense. With this definition, the argument for the inequality of NTIME$_{\alpha}$ and NSPACE$_{\alpha}$ still works; however, it is now possible to prove a rather strong inclusion result:

\begin{thm}{\label{alternative nondeterminism}}
For all $\alpha<\omega_{1}$, we have NDEC$^{\text{ITTM},*}\subseteq$NSPACE$^{\text{ITTM},*}_{\omega+1}$.
\end{thm}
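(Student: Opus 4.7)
The plan is to witness existentials not just by the real number $y$ but by a real number $z$ encoding the entire halting computation of the deciding ITTM on $(x,y)$; verification of such a witness then reduces to an ITRM-computation, which by Lemma \ref{alpha ITRM ITTM simulation} and Lemma \ref{ITRM recog ITTM simple} can be simulated by an ITTM using only recursive snapshots. Concretely, suppose $X\in$NDEC$^{\text{ITTM},*}$ is witnessed by an ITTM-decidable $Y\subseteq\mathfrak{P}(\omega)^{2}$ with $X=\pi_{1}(Y)$, decided by some ITTM-program $P$. I would set
\[
Y':=\{(x,y\oplus z):z\text{ encodes a halting computation of }P^{(x,y)}\text{ with output }1\}.
\]
Every halting computation $P^{(x,y)}$ has length $<\lambda^{(x,y)}<\omega_{1}$ and consists of countably many configurations on an $\omega$-length tape, so is codable by a single real $z$; conversely any correctly-coded halting computation forces $(x,y)\in Y$. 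Hence $X=\pi_{1}(Y')$.

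The task thus reduces to showing $Y'\in$SPACE$^{\text{ITTM}}_{\omega+1}$. I would establish this by checking that $Y'$ is ITRM-decidable with $x\oplus y\oplus z$ on the oracle tape: namely, given such an input, an ITRM verifies (i) that $z$ decodes to a sequence of configurations indexed by a countable ordinal $\gamma$ (possible since WO is ITRM-decidable by \cite{ITRM2}), (ii) that the $0$-th configuration is the correct initial configuration of $P$ on input $(x,y)$, (iii) that for each successor step $\beta+1<\gamma$ the configuration at $\beta+1$ arises from that at $\beta$ by one step of $P$ (a purely local cell-wise check), (iv) that the final configuration is a halting configuration with output $1$, and — the main technical point — (v) that at every limit $\beta<\gamma$ each tape cell of the $\beta$-th configuration equals the lim sup of that cell across predecessor configurations, with head at $0$ and state the designated limit state.

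The main obstacle is step (v): it is a $\Pi_{2}$-condition quantifying over the ordinals $<\beta$ for each of the possibly unboundedly many limits $\beta<\gamma$. However, since all these ordinals are coded inside $z$ and bounded by $\gamma<\omega_{1}^{\text{CK},z}$, an ITRM in oracle $z$ can carry out the lim-sup check by traversing the $z$-coded well-founded structure in the same style used for the ITRM-decision of WO. Once $Y'$ is verified to be ITRM-decidable, Lemma \ref{alpha ITRM ITTM simulation} and Lemma \ref{ITRM recog ITTM simple} produce an ITTM-program deciding $Y'$ with only recursive snapshots, and since all recursive reals lie in $L_{\omega+1}[x\oplus y\oplus z]$ (as used just before Corollary \ref{ITTM time space complexity}), we obtain $Y'\in$SPACE$^{\text{ITTM}}_{\omega+1}$, whence $X\in$NSPACE$^{\text{ITTM},*}_{\omega+1}$, as required.
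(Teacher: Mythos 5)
Your proposal is correct and takes essentially the same route as the paper: the paper likewise replaces the existential witness by a real coding an accepting computation of the deciding program, observes that verifying such a code is ITRM-decidable and hence (via the ITRM-to-ITTM simulation with recursive snapshots) lands in SPACE$^{\text{ITTM}}_{\omega+1}$, and concludes that $X$ is the projection of this set. Your write-up simply spells out the ITRM-verification of the coded computation (well-foundedness, successor and limit steps) in more detail than the paper does.
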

\begin{proof}
Suppose that $X$ is nondeterministically decidable by the ITTM-program $P$. Let $Y=\{(x,c):c$ codes an accepting $P$-computation on input $x\}$. Then $Y$ is ITRM-decidable and thus belongs to SPACE$^{\text{ITTM}}_{\omega+1}$ and moreover, $X$ is the projection of $Y$ to its first component. Hence $X$ belongs to NSPACE$^{\text{ITTM},*}_{\omega+1}$.
\end{proof}

\section{Further Work}

The idea of the space complexity measures studied above is that the complexity of a snapshot is the index of the first $L$-level (relativized to the input) at which it appears. 
One natural alternative proposal (also to be found in \cite{Loe}) would be to take $\omega_{1}^{\text{CK},s}$ as a measure for the complexity of the snapshots $s$ (another possibility would be $\lambda^{x}$; note that the input does not count when measuring snapshot complexity). Clearly, when all snapshots are contained in a certain $L$-level, then the corresponding $\omega_{1}^{\text{CK},s}$'s will also be `small'; in particular, if a computation uses only recursive snapshots, then we will have $\omega_{1}^{\text{CK},s}=\omega_{1}^{\text{CK}}$ for any occuring snapshot. It is then not hard to see that the central results of this paper, such as Theorem \ref{space and time for ITTMs}, Corollary \ref{ITTM time space complexity}, Theorem \ref{sigma bound on inhabited classes}, Lemma \ref{WO counterexample}, Theorem \ref{p++ vs pspace++}, Theorem \ref{computable snapshots}, Theorem \ref{complex singletons}, Theorem \ref{many points of increase} and Corollary \ref{increasing at lambda}, will go through when one (re)defines the space usage of a computation as the supremum of $\omega_{1}^{\text{CK},s}$ for all occuring snapshots $s$ and SPACE$^{\text{ITTM}}_{\alpha}$ as the set of sets that are decidable by programs with space usage $<\alpha$ on any input, as it is proposed in \cite{Loe}. 
However, this is not clear for the proposal to use $\lambda^{s}$ as a measure for the complexity of $s$. (Observe that, if $x$ is e.g. Cohen-generic over $L_{\Sigma+1}$, we would have $\lambda^{x}=\lambda$ even if $x$ is very high up in the constructible hierarchy or not constructible at all.) It this thus possible that some interesting new phenomena may show up with this notion of space complexity.

\bigskip

A topic that was not considered here in detail was the relation of the nondeterministic
variants of the respective ITTM-complexity classes. Such concepts
were considered in \cite{Wi1}, where it is e.g. shown that NTIME$^{\text{ITTM}}_{\alpha}\subseteq$NSPACE$^{\text{ITTM}}_{\alpha}$
for $\alpha\in(\omega,\omega_{1}^{\text{CK}})$ (Proposition 7.21). The question whether this holds in general appears to be open. It may be worthwhile to see whether the methods developed in this paper can also shed light on these classes.

\bigskip

We conclude with some questions suggested by, but left open in this paper. 

\begin{question} Define "writable with recursive snapshots" and "clockable with recursive snapshots" in the obvious way, along with "writable/clockable with snapshots in $L_{\alpha}$, where we imagine that the output is written to an extra "write only"-tape $t$, the contents of which do not count in measuring space complexity. What is the supremum of the ordinals that are clockable/writable with recursive snapshots/snapshots in $L_{\alpha}$?
\end{question}

\begin{question}
	Although we know now that there are many proper inclusions among the classes SPACE$^{\text{ITTM}}_{\alpha}$ for different $\alpha<\lambda$, we do not know where they are. We do e.g. not know whether we can have SPACE$^{\text{ITTM}}_{\alpha}=$SPACE$^{\text{ITTM}}_{\alpha+1}$ for any $\alpha\in(\omega,\lambda)$ or whether such inclusions are always proper. In the former case, it would be interesting to see what the next strictly larger stage after a given $\alpha$ is.
	
	The same questions can of course be asked concerning SPACE$^{\text{ITTM}}_{\alpha}$ for $\alpha\in[\lambda,\sigma)$, i.e. for $\alpha$ below $\lambda^{c}$ for an ITRM-recognizable $c$.
\end{question}

\begin{question}
Which of the above results have analogues for $\alpha$-ITTMs or Ordinal Turing Machines (OTMs)?
\end{question}

Finally, the notion of semidecidable complexity, i.e. the "sSPACE"-classes introduced above, clearly yields as many questions as the original notion of decidable complexity.

\end{document}